\newtheorem{theorem}{Theorem}\newtheorem{lemma}{Lemma}\newtheorem{proposition}{Proposition}\newtheorem{corollary}{Corollary}
\theoremstyle{definition}
\newtheorem{criterion}{Criterion}
\renewcommand{\thefootnote}{\fnsymbol{footnote}}
\date{}
\numberwithin{equation}{section}
\numberwithin{theorem}{section}
\begin{document}

\title{On the constrained mock-Chebyshev least-squares}


\author[1]{S. De Marchi}
\author[2]{F. Dell'Accio}
\author[3]{M. Mazza}

\affil[1]{\small Department of Mathematics, University of Padova, 35121 Padova, Italy}
\affil[2]{\small Department of Mathematics and Informatics, University of Calabria, 87036 Rende (Cs), Italy}
\affil[3]{\small Department of Science and High Technology, University of Insubria, 22100 Como, Italy}

%

\maketitle

\begin{abstract}
The algebraic polynomial interpolation on uniformly distributed nodes is
affected by the Runge phenomenon, also when the function to be interpolated
is analytic. Among all techniques that have been proposed to defeat this
phenomenon, there is the mock-Chebyshev interpolation which is an
interpolation made on a subset of the given nodes whose elements mimic
\textit{as well as possible} the Chebyshev-Lobatto points. In this work we use
the simultaneous approximation theory to combine the previous
technique with a polynomial regression in order to increase the accuracy of
the approximation of a given analytic function. We give indications on
how to select the degree of the simultaneous regression in order to obtain
polynomial approximant good in the uniform norm and provide a sufficient condition
to improve, in that norm, the accuracy of the mock-Chebyshev
interpolation with a simultaneous regression. Numerical results are provided.
\end{abstract}

\noindent\textbf{Keywords: }Runge phenomenon; Chebyshev-Lobatto nodes; mock-Chebyshev interpolation; simultaneous regression


\let\thefootnote\relax\footnote{\emph{Email addresses}: \texttt{demarchi@math.unipd.it} (Stefano De Marchi), \texttt{francesco.dellaccio@unical.it} (Francesco Dell'Accio), \texttt{mariarosa.mazza@uninsubria.it} (Mariarosa Mazza)}
\section{Introduction}
In many scientific disciplines, when we want to study a phenomenon, we can start in observing and recording
what happens at regular instants of time. This provides a sample of information that we can use
to give a more or less accurate approximation of the observed phenomenon.
For this aim mathematical tools are needful. The first step is to imagine regular instants
of time as a set of uniform distributed points and the sample of information as the evaluations of an unknown function. In this case a classical technique, used to associate to the discrete set of experimental data a continuous approximation of the phenomenon, is the algebraic polynomial interpolation. This technique has the well-known drawback that on uniformly distributed nodes might not converge, even if the considered function is regular. A classical example is given by Runge's function%
\begin{equation*}
f(t)=\frac{1}{1+25t^{2}}{, ~t\in \lbrack -1,1]}
\end{equation*}%
on an equally spaced triangular array of nodes%
\begin{equation}
x_{0,0};\quad x_{0,1},x_{1,1};\quad x_{0,2},x_{1,2},x_{2,2};\quad%
\ldots \quad ;\quad x_{0,n},x_{1,n},\ldots ,x_{n,n};\quad \ldots   \notag
\end{equation}%
where $x_{i,n}=-1+\frac{2}{n}i$ for $i=0,1,\ldots ,n, n\in{\mathbb N_0}$. In this case, the
error made by interpolating $f$ with polynomials has wild oscillations, a phenomenon known as
\textit{Runge Phenomenon}. Many techniques have been proposed to defeat this
phenomenon; just to mention some of them, the least-squares fitting by
polynomials \cite{rakhmanov2007bounds}, the barycentric rational interpolation \cite{baltensperger1999exponential,bos2011lebesgue,floater2007barycentric}, its extended version \cite{klein2012extension}, the interpolation
on subintervals \cite{boyd2011exponentially}. A further technique exploited to cut down the
Runge phenomenon is the so called mock-Chebyshev subset interpolation, which
takes advantages of the optimality of the interpolation processes on
Chebyshev-Lobatto nodes \cite{rivlin1974chebyshev}. The main goal of this paper consists in a
combination of this kind of interpolation with a regression aimed to improve
the accuracy of the approximation of an analytic function; we will refer to
this combination as \textit{constrained mock-Chebyshev least-squares}.

The paper is structured as follows. In Section 2 we discuss some
details on the mock-Chebyshev subset interpolation. The constrained
mock-Chebyshev least-squares are introduced in the Section 3 and deeply
investigated in Sections 4 and 5 in which we deal with the choice of the
degree of the simultaneous regression and with an estimation of the
error in the uniform norm, respectively. Section 6 is devoted to some
numerical results. Last Section contains the algorithm.
\section{Mock-Chebyshev subset interpolation}

Let $f$ be an analytic function with singularities close to the interval $%
[-1,1]$ and suppose that its evaluations are known on $n+1$ equally spaced points of
that interval. The idea that underlies the
mock-Chebyshev subset interpolation is to interpolate $f$ only on a proper
subset, consisting of $m+1$ of the given nodes, which "looks like" the Chebyshev-Lobatto
grid of order $m+1$. The result is that if we carefully choose $m$, the
convergence of the interpolation process on such a subset of nodes, for $n$
which tends to infinity, will be preserved (cf. \cite{piazzon2013small}). Some notations: from here onwards
we will indicate the equispaced grid of cardinality $n+1$ with the symbol $%
X_n$, while the mock-Chebyshev subset of $X_n$ of order $m+1$ will
be denoted by ${X'_m}$. To understand how to properly choose $m$
(see e.g. \cite{boyd2009divergence}), let us remember that the $m+1$ Chebyshev-Lobatto nodes are
defined as%
\begin{equation*}
x_{j}^{CL}=-\cos \left( \frac{\pi }{m}j\right) ,~j=0,1,\ldots ,m.
\end{equation*}%
Let us expand $x_{1}^{CL}$ in Taylor series centered in zero%
\begin{equation}
x_{1}^{CL}=-1+\frac{\pi ^{2}}{2m^{2}}+O\left( \frac{1}{m^{4}}\right) <-1+%
\frac{\pi ^{2}}{2m^{2}}.  \label{no}
\end{equation}%
Being $x_{0}^{CL}=-1$, the difference $x_{1}^{CL}-x_{0}^{CL}$ is a $O\left(
\frac{1}{m^{2}}\right) $. In other words, this means that the $m+1$ nodes of
Chebyshev-Lobatto are distributed in $[-1,1]$ with a density that is roughly
quadratic in $m$. So for $n$ proportional to $m^{2}$ or $m$
proportional to $\sqrt{n}$, we can select among the given nodes a subset
which mimic a sufficiently large Chebyshev-Lobatto grid. Let $c$ be the
constant of proportionality; a way to calculate it is to impose that the
second node of the Chebyshev-Lobatto grid is as close as possible to the
second node of the equispaced set $X_n$
\begin{equation*}
-\cos \left( \frac{\pi }{m}\right) \simeq -1+\frac{2}{n}.
\end{equation*}%
This can be done in the following manner: by (\ref{no}) we fix the largest
integer $m$ such that
\begin{equation*}
-1+\frac{1}{n}<-1+\frac{\pi ^{2}}{2m^{2}}
\end{equation*}%
so for%
\begin{equation}
m=\left\lfloor \frac{\pi }{\sqrt{2}}\sqrt{n}\right\rfloor  \label{m}
\end{equation}%
for sure $-1+\frac{2}{n}$ is the point of $X_n$ closest to $x_{1}^{CL}$ (for an example, see Figure \ref{mock}).
This choice of $c<\frac{\pi }{\sqrt{2}}$ avoids the fact that the endpoints $%
-1$ and $1$ can be selected more than once.

\begin{figure}
\setlength{\unitlength}{9cm}
\centering
\includegraphics[width=\unitlength]{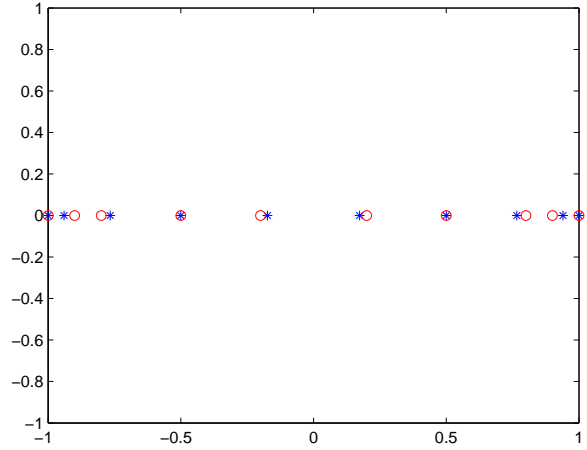}
\caption{Plot of the Chebyshev-Lobatto nodes (\textcolor[rgb]{0.00,0.00,0.98}{$*$}) and mock-Chebyshev nodes (\textcolor[rgb]{0.98,0.00,0.00}{$\circ$}) for $n+1=21, m=\frac{\pi}{\sqrt{2}}\sqrt{20}=9$.} \label{mock}
\end{figure}

For analytic functions the polynomial interpolation on Chebyshev nodes converges geometrically and stably. The mock-Chebyshev interpolation is a stable procedure, but its rate of convergence is subgeometric. In \cite{platte2011impossibility} it has been shown that on equispaced nodes no stable method can converge geometrically.

\section{Constrained mock-Chebyshev least-squares}

In performing the mock-Chebyshev interpolation we know the evaluations
of $f$ on the whole set $X_n$, but actually we only use the information
corresponding to the elements of $X'_m$. Indeed, in \cite{boyd2009divergence}
the $n-m$ remaining nodes are definitively discarded and the corresponding
evaluations are lost. Our idea is to use those nodes, whose set will be
denoted by $X''_{n-m}=\left\{ x''_{1,n-m},x''_{2,n-m},...,x''_{n-m,n-m}\right\}$, $x''_{1,n-m}<x''_{2,n-m}<...<x''_{n-m,n-m}$, to improve the accuracy of the
approximation through a simultaneous regression. More precisely, let $f$ be an
analytic function on $[-1,1]$ and let $\mathcal{P}^{r\ast }=\left\{ P\in \mathcal{P}^{r}:P(x'_{i,m})=f(x'_{i,m}),~i=0,1,\ldots ,m\right\} $ where $\mathcal{P}^{r}$ is the space of polynomials of degree $\le r$ and $m<r\leq n$. We search for the solution of the following \textit{constrained least-squares problem} \cite{gautschi2004orthogonal,mastroianni2008interpolation,bokhari1996sub}%
\begin{equation}
\min_{P\in \mathcal{P}^{r\ast }}\left\Vert f-P\right\Vert _{2}^{2}
\label{probl}
\end{equation}%
where $\left\Vert \cdot \right\Vert _{2}$ is the discrete $2$-norm on $%
X''_{n-m}$.
\begin{theorem}
The constrained least-squares problem (\ref{probl}) has a unique solution.
\end{theorem}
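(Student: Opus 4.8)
The plan is to turn the \emph{constrained} least-squares problem into an ordinary (unconstrained) linear least-squares problem, and then to invoke the classical criterion for the latter: a solution always exists, and it is unique precisely when the associated design matrix has full column rank.

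First I would note that $\mathcal{P}^{r\ast}$ is a nonempty affine subspace of $\mathcal{P}^{r}$, since it contains the mock-Chebyshev interpolant $L_{m}\in\mathcal{P}^{m}\subseteq\mathcal{P}^{r}$ of $f$ on $X'_{m}$. Let $w_{m}(x)=\prod_{i=0}^{m}(x-x'_{i,m})$ be the nodal polynomial of $X'_{m}$, which has degree $m+1$. For every $P\in\mathcal{P}^{r\ast}$ the difference $P-L_{m}$ lies in $\mathcal{P}^{r}$ and vanishes on the $m+1$ nodes of $X'_{m}$, hence $P-L_{m}=w_{m}q$ for a unique $q\in\mathcal{P}^{r-m-1}$; conversely every such $q$ produces an element of $\mathcal{P}^{r\ast}$. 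Thus $P\leftrightarrow q$ is an affine bijection between $\mathcal{P}^{r\ast}$ and $\mathcal{P}^{r-m-1}$, and (\ref{probl}) becomes
\[
\min_{q\in\mathcal{P}^{r-m-1}}\ \sum_{k=1}^{n-m}\big(f(x''_{k,n-m})-L_{m}(x''_{k,n-m})-w_{m}(x''_{k,n-m})\,q(x''_{k,n-m})\big)^{2}.
\]

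Writing $q(x)=\sum_{j=0}^{r-m-1}a_{j}x^{j}$, this is $\min_{a}\lVert b-Aa\rVert_{2}^{2}$ with $b_{k}=f(x''_{k,n-m})-L_{m}(x''_{k,n-m})$ and $A_{k,j}=w_{m}(x''_{k,n-m})\,(x''_{k,n-m})^{j}$, for $k=1,\dots,n-m$ and $j=0,\dots,r-m-1$. Existence of a minimizer is then automatic: it is the orthogonal projection of $b$ onto the column space of $A$, i.e.\ the normal equations $A^{\top}Aa=A^{\top}b$ are always consistent. The hypothesis $m<r\le n$ guarantees $1\le r-m\le n-m$, so $A$ has $r-m$ columns and at least as many rows. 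I expect the only real work to be the verification that $\ker A=\{0\}$: if $Aa=0$ then $w_{m}(x''_{k,n-m})\,q(x''_{k,n-m})=0$ for every $k$, and since $X''_{n-m}\cap X'_{m}=\emptyset$ we have $w_{m}(x''_{k,n-m})\neq 0$, so $q$ vanishes at the $n-m$ distinct points of $X''_{n-m}$; as $\deg q\le r-m-1\le n-m-1<n-m$, this forces $q\equiv 0$ and hence $a=0$.

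Therefore $A^{\top}A$ is symmetric positive definite, the normal equations have a unique solution $a^{\ast}$, and $P^{\ast}=L_{m}+w_{m}q^{\ast}$ (with $q^{\ast}$ the polynomial with coefficient vector $a^{\ast}$) is the unique solution of (\ref{probl}). Equivalently, one may argue directly that $P\mapsto\lVert f-P\rVert_{2}^{2}$ is convex on the affine set $\mathcal{P}^{r\ast}$ and strictly convex along every admissible direction $w_{m}q$ with $q\neq 0$, by the same non-vanishing argument, so its minimizer over $\mathcal{P}^{r\ast}$ is unique. The crux is thus the injectivity step, which is exactly where the assumptions $r\le n$ and $X''_{n-m}\cap X'_{m}=\emptyset$ are used.
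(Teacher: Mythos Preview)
Your proof is correct and follows essentially the same route as the paper: both parametrize $\mathcal{P}^{r\ast}$ as $P_{X'}+\omega_m\,\mathcal{P}^{r-m-1}$ and reduce to an unconstrained least-squares problem over $\mathcal{P}^{r-m-1}$. The only cosmetic difference is that the paper divides through by $\omega_m$ to recast the reduced problem as a \emph{weighted} discrete least-squares problem $\min_Q\|\hat f-Q\|_{2,\omega_m^2}^2$ and then simply invokes the classical result, whereas you keep the matrix form and spell out the full-rank verification explicitly; your injectivity argument (using $\omega_m\neq 0$ on $X''_{n-m}$ and $\deg q\le r-m-1<n-m$) is precisely what underlies the paper's one-line appeal.
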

\begin{proof}
Let us denote by $P_{X^{\prime }}$ the interpolating
polynomial for $f$ on $X'_m$. It is not difficult to verify that a generic polynomial $P\in \mathcal{P%
}^{r\ast }$ is of the form $P(t)=P_{X^{\prime }}(t)+Q(t)\omega _{m}(t)$
with $\omega _{m}(t)=\prod\limits_{i=0}^{m}(t-x'_{i,m})$ and $Q(t)$
an arbitrary polynomial of degree $r-m-1$. The problem (\ref{probl}) then becomes
\begin{equation*}
\begin{array}{l}
\displaystyle \min_{Q\in \mathcal{P}^{r-m-1}}\left\Vert f-(P_{X^{\prime }}+Q\omega
_{m})\right\Vert _{2}^{2} \\
\hspace{1cm}=\displaystyle \min_{Q\in \mathcal{P}^{r-m-1}}\sum\limits_{k=1}^{n-m}\left\{
f\left( x''_{k,n-m}\right) -P_{X^{\prime }}\left(
x''_{k,n-m}\right) -Q\left( x''_{k,n-m}\right)
\omega _{m}\left( x''_{k,n-m}\right) \right\} ^{2} \\
\hspace{1cm}=\displaystyle \min_{Q\in \mathcal{P}^{r-m-1}}\sum\limits_{k=1}^{n-m}\left\{
\dfrac{f\left( x''_{k,n-m}\right) -P_{X^{\prime }}\left(
x''_{k,n-m}\right) }{\omega _{m}\left( x''_{k,n-m}\right) }-Q\left( x''_{k,n-m}\right) \right\} ^{2}\omega
_{m}^{2}\left( x''_{k,n-m}\right) .%
\end{array}%
\end{equation*}%
By introducing the following discrete weighted $2$-norm%
\begin{equation*}
\left\Vert u\right\Vert _{2,\omega _{m}^{2}}=\left(
\sum\limits_{k=1}^{n-m}w_{k}u^{2}(x''_{k,n-m})\right) ^{\frac{1%
}{2}}
\end{equation*}%
where $w_{k}=\omega _{m}^{2}(x''_{k,n-m})$ for $k=1,\ldots ,n-m
$ and by defining $\hat{f}$ as%
\begin{equation}
\hat{f}(t):=\frac{f(t)-P_{X^{\prime }}(t)}{\omega _{m}(t)},\quad t\in\left[-1,1\right],\label{fcap}
\end{equation}%
the problem (\ref{probl}) can be reduced to the following classical
least-squares problem%
\begin{equation}
\min\limits_{Q\in \mathcal{P}^{r-m-1}}\left\Vert \hat{f}-Q\right\Vert
_{2,\omega _{m}^{2}}^{2}  \label{probl2}
\end{equation}%
which has a unique solution.
\end{proof}

\smallskip
Denoting by $\hat{Q}_{X^{\prime \prime }}(t)$ the solution of (\ref%
{probl2}), the desired polynomial approximant is
\begin{equation}
\hat{P}_{X}(t)=P_{X^{\prime }}(t)+\hat{Q}_{X^{\prime \prime }}(t)\omega
_{m}(t).\label{pcap}
\end{equation}%
To write $\hat{P}_{X}$ explicitly, let us introduce the discrete inner product
associated to the norm $\left\Vert \cdot \right\Vert _{2,\omega _{m}^{2}}$%
\begin{equation*}
\left( u,v\right) _{\omega
_{m}^{2}}=\sum\limits_{k=1}^{n-m}w_{k}u(x''_{k,n-m})v(x''_{k,n-m})
\end{equation*}%
and let $\left\{ \pi _{i}(t,\omega _{m}^{2})\right\} _{i=0}^{r-m-1}$ be a
basis of $\mathcal{P}^{r-m-1}$ orthogonal with respect to the previous
product. We can express $\hat{Q}_{X^{\prime \prime }}(t)$ with respect to
that basis as%
\begin{equation*}
\hat{Q}_{X^{\prime \prime }}(t)=\sum\limits_{i=0}^{r-m-1}q_{i}\pi
_{i}(t),~q_{i}=\frac{\left( \hat{f},\pi _{i}\right) _{\omega
_{m}^{2}}}{\left( \pi _{i},\pi _{i}\right) _{\omega _{m}^{2}}}.
\end{equation*}%
Then $\hat{P}_{X}(t)$ becomes explicitly%
\begin{equation*}
\hat{P}_{X}(t)=P_{X^{\prime }}(t)+\left( \sum\limits_{i=0}^{r-m-1}q_{i}\pi
_{i}(t)\right) \prod\limits_{i=0}^{m}(t-x'_{i,n}).
\end{equation*}
\begin{theorem}
In the discrete $2$-norm on $X''_{n-m}$ the inequality
\begin{equation*}
\left\Vert f-\hat{P}_{X}\right\Vert _{2}<\left\Vert f-P_{X^{\prime
}}\right\Vert _{2}
\end{equation*}
holds.
\end{theorem}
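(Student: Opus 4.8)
The plan is to read the inequality off the reduction of (\ref{probl}) to the classical least-squares problem (\ref{probl2}) already performed in the proof of the preceding theorem. The key observation is that the admissible set $\mathcal{P}^{r-m-1}$ of (\ref{probl2}) contains the zero polynomial, and that $Q\equiv 0$ in the representation (\ref{pcap}) returns exactly the mock-Chebyshev interpolant $P_{X'}$; since $\hat Q_{X''}$ is, by construction, a minimizer of (\ref{probl2}), it cannot be beaten by this particular competitor.

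First I would note that $\omega_m$ does not vanish on $X''_{n-m}$ — its zeros are the nodes of $X'_m$, which are disjoint from $X''_{n-m}$ — so the weights $w_k=\omega_m^2(x''_{k,n-m})$ are positive, $\|\cdot\|_{2,\omega_m^2}$ is a genuine norm, and the definition (\ref{fcap}) of $\hat f$ makes sense on $X''_{n-m}$. Factoring $\omega_m$ out of the residuals at the nodes of $X''_{n-m}$ and using (\ref{fcap})--(\ref{pcap}) gives the two identities
\[
\|f-\hat P_X\|_2^2=\|\hat f-\hat Q_{X''}\|_{2,\omega_m^2}^2,\qquad
\|f-P_{X'}\|_2^2=\|\hat f-0\|_{2,\omega_m^2}^2=\|\hat f\|_{2,\omega_m^2}^2 .
\]
By optimality of $\hat Q_{X''}$ for (\ref{probl2}) and since $0\in\mathcal{P}^{r-m-1}$, the left-hand quantity is $\le$ the right-hand one; this already yields the inequality in non-strict form.

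The real content is the strictness, and this is the step I expect to be the main obstacle. Here I would invoke the uniqueness of the solution of (\ref{probl2}) --- equivalently, strict convexity of the weighted residual, the Gram matrix of $\{\pi_i\}_{i=0}^{r-m-1}$ with respect to $(\cdot,\cdot)_{\omega_m^2}$ being positive definite because $n-m\ge r-m$ and a nonzero polynomial of degree $\le r-m-1$ cannot vanish at all $n-m$ nodes of $X''_{n-m}$. If the two residuals coincided, then $Q\equiv 0$ would also minimize (\ref{probl2}), hence $\hat Q_{X''}\equiv 0$ by uniqueness, i.e. $q_i=(\hat f,\pi_i)_{\omega_m^2}/(\pi_i,\pi_i)_{\omega_m^2}=0$ for all $i$, meaning $\hat f$ is $\omega_m^2$-orthogonal on $X''_{n-m}$ to the whole space $\mathcal{P}^{r-m-1}$. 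Excluding this degenerate situation is the point requiring care: for $r=n$ it is immediate (then $\dim\mathcal{P}^{r-m-1}=n-m$, so $\hat Q_{X''}$ interpolates $\hat f$ on $X''_{n-m}$ and the residual drops to $0$, which is strictly less than $\|\hat f\|_{2,\omega_m^2}$ as soon as $\hat f\not\equiv0$ there), while for $m<r<n$ one must use the standing hypotheses on $f$ --- analytic with singularities close to $[-1,1]$, hence not agreeing on $X_n$ with a polynomial of degree $\le m$, so that $\hat f$ is a nonzero sampled function whose exact orthogonality to all of $\mathcal{P}^{r-m-1}$ would be a non-generic coincidence --- to conclude $\hat Q_{X''}\not\equiv0$ and therefore $\|f-\hat P_X\|_2<\|f-P_{X'}\|_2$. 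Making that last non-degeneracy step fully rigorous, rather than merely ``generic'', is the delicate part of the argument.
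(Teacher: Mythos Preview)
Your argument is correct and closely parallels the paper's, though the presentation differs. The paper does not argue abstractly from the minimizing property; instead it writes out the explicit Pythagorean identity
\[
\|f-\hat P_X\|_2^2=\|f-P_{X'}\|_2^2-\sum_{i=0}^{r-m-1}\tilde q_i^{\,2}\,\|\pi_i\omega_m\|_2^2,
\qquad \tilde q_i=\frac{(f-P_{X'},\pi_i\omega_m)}{(\pi_i\omega_m,\pi_i\omega_m)},
\]
coming from the orthogonal expansion of $\hat Q_{X''}$ in the basis $\{\pi_i\}$, and stops there. Your route via ``$0$ is an admissible competitor in (\ref{probl2}) and the minimizer is unique'' is the same content packaged differently: the subtracted sum in the paper's formula is exactly $\|\hat Q_{X''}\|_{2,\omega_m^2}^2$, so ``sum $>0$'' is equivalent to your ``$\hat Q_{X''}\not\equiv 0$''. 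The explicit formula has the mild advantage of quantifying the gain; your version makes the logical structure clearer.

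On strictness you are being more scrupulous than the paper. The paper's proof simply displays the identity above and does not argue that the subtracted sum is nonzero; strictness is asserted, not derived. Your observation that equality would force $\hat Q_{X''}\equiv 0$, i.e.\ $\hat f$ orthogonal in $(\cdot,\cdot)_{\omega_m^2}$ to all of $\mathcal P^{r-m-1}$, is exactly the missing step, and you are right that ruling this out is not automatic from the stated hypotheses (indeed, if $f$ agreed with $P_{X'}$ on $X''_{n-m}$ both sides would vanish). So the ``delicate part'' you flag is a genuine gap --- but it is a gap the paper shares, not one in your proposal relative to the paper.
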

\begin{proof}
The choice of an orthogonal basis for $\mathcal{P}^{r-m-1}$ allows us to
express the error $\hat{f}-\hat{Q}%
_{X^{\prime \prime }}$ in the $\left\Vert \cdot \right\Vert _{2,\omega
_{m}^{2}}$ norm as follows:%
\begin{equation*}
\left\Vert \hat{f}-\hat{Q}_{X^{\prime \prime }}\right\Vert _{2,\omega
_{m}^{2}}=\left\{ \left\Vert \hat{f}\right\Vert _{2,\omega
_{m}^{2}}^{2}-\sum\limits_{i=0}^{r-m-1} q_{i}^{2}\left\Vert \pi _{i}\right\Vert _{2,\omega _{m}^{2}}^{2}\right\} ^{\frac{1%
}{2}},~q_{i}=\frac{\left( \hat{f},\pi _{i}\right) _{\omega _{m}^{2}}}{\left(
\pi _{i},\pi _{i}\right) _{\omega _{m}^{2}}}.
\end{equation*}%
Therefore the error $f-\hat{P}_{X}$ in the $2$-norm is
\begin{equation*}
\left\Vert f-\hat{P}_{X}\right\Vert _{2}=\left\{ \left\Vert f-P_{X^{\prime
}}\right\Vert _{2}^{2}-\sum\limits_{i=0}^{r-m-1} \tilde{q}%
_{i}^{2}\left\Vert \pi _{i}\omega _{m}\right\Vert
_{2}^{2}\right\} ^{\frac{1}{2}},~\tilde{q}_{i}=\frac{\left(
f-P_{X^{\prime }},\pi _{i}\omega _{m}\right) }{\left( \pi _{i}\omega
_{m},\pi _{i}\omega _{m}\right) }.
\end{equation*}%
\end{proof}

In other words, the error made by using the constrained mock-Chebyshev least-squares method
is, in the $2$-norm, strictly smaller than the error produced when we restrict
ourselves to the mock-Chebyshev subset interpolation.

\section{The degree of simultaneous regression}

As shown in the previous section we approximate the function $f$ with a
least-squares polynomial that satisfies interpolation conditions on a
mock-Chebyshev subset of the given nodes. We have not specified yet how to
choose the degree of the constructed approximant $\hat{P}_{X}$. When this degree increases
up to the total number of nodes the approximation gets worse, since the
combined approximant approaches the interpolating polynomial.

\begin{theorem}
Let $r$ be the degree of $\hat{P}_{X}$ and let us denote by $P_{X}$ the
interpolating polynomial of $f$ on $X_n$. If $r=n$ then%
\begin{equation*}
\hat{P}_{X}\equiv P_{X}.
\end{equation*}
\end{theorem}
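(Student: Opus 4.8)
The plan is to show that when $r=n$, the constrained least-squares polynomial $\hat P_X$ is forced to interpolate $f$ at \emph{all} of $X_n$, and then invoke uniqueness of the interpolating polynomial of degree $\le n$ on $n+1$ nodes. First I would recall the structure established in the proof of Theorem~1: any $P\in\mathcal{P}^{r\ast}$ has the form $P(t)=P_{X'}(t)+Q(t)\omega_m(t)$ with $\deg Q\le r-m-1$, and the minimization reduces to the weighted least-squares problem~(\ref{probl2}) for $\hat Q_{X''}$ over $\mathcal{P}^{r-m-1}$. When $r=n$, the relevant polynomial space is $\mathcal{P}^{n-m-1}$, which has dimension $n-m$; crucially, this matches exactly the number of nodes in $X''_{n-m}$.

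The key step is the observation that a weighted discrete least-squares problem with a strictly positive weight, over a polynomial space whose dimension equals the number of sample points, is in fact an \emph{interpolation} problem: the minimum value is zero and it is attained by the polynomial that interpolates the data at those points. Concretely, the weights $w_k=\omega_m^2(x''_{k,n-m})$ are all strictly positive (the mock-Chebyshev nodes $x'_{i,m}$ are disjoint from the $x''_{k,n-m}$), so minimizing $\sum_{k=1}^{n-m} w_k\big(\hat f(x''_{k,n-m})-Q(x''_{k,n-m})\big)^2$ over $Q\in\mathcal{P}^{n-m-1}$ drives each summand to zero, forcing $\hat Q_{X''}(x''_{k,n-m})=\hat f(x''_{k,n-m})$ for every $k$. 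Unwinding the definition~(\ref{fcap}) of $\hat f$ gives $\hat P_X(x''_{k,n-m})=P_{X'}(x''_{k,n-m})+\hat Q_{X''}(x''_{k,n-m})\omega_m(x''_{k,n-m})=f(x''_{k,n-m})$ for all $k=1,\dots,n-m$.

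From here the conclusion is immediate: $\hat P_X\in\mathcal{P}^n$ agrees with $f$ at the $m+1$ nodes of $X'_m$ (by construction, since $\hat P_X\in\mathcal{P}^{r\ast}$) and at the $n-m$ nodes of $X''_{n-m}$ (just shown), hence at all $n+1$ nodes of $X_n=X'_m\cup X''_{n-m}$. Since $P_X$ is the unique polynomial of degree $\le n$ interpolating $f$ at these $n+1$ points, we get $\hat P_X\equiv P_X$.

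I do not anticipate a serious obstacle here — the argument is essentially a dimension count plus positivity of the weights. The only point requiring a modicum of care is justifying that the least-squares solution over a space of dimension equal to the number of (distinct) data points is the interpolant; this follows either from the fact that the associated Vandermonde-type system is square and nonsingular on distinct nodes, or equivalently from noting that the orthogonal-basis error formula in the proof of Theorem~2 collapses to zero when $r-m-1$ indices exhaust a basis of the full $(n-m)$-dimensional space on which the discrete inner product is a genuine inner product. I would phrase it via the latter route to stay consistent with the paper's notation.
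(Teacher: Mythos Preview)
Your proposal is correct and follows essentially the same approach as the paper: both observe that when $r=n$ the space $\mathcal{P}^{n-m-1}$ has dimension equal to the cardinality of $X''_{n-m}$, so the least-squares polynomial $\hat Q_{X''}$ becomes the interpolant of $\hat f$ on $X''_{n-m}$, whence $\hat P_X$ interpolates $f$ on all of $X_n$ and uniqueness gives $\hat P_X\equiv P_X$. Your version is slightly more explicit about why positivity of the weights $w_k$ and the dimension match force interpolation, which the paper takes for granted.
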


\begin{proof}
Recalling that
\begin{equation*}
\hat{P}_{X}(t)=P_{X^{\prime }}(t)+\hat{Q}_{X^{\prime \prime }}(t)\omega
_{m}(t),
\end{equation*}%
if $\hat{P}_{X}$ is an $n$ degree polynomial, the regression polynomial $%
\hat{Q}_{X^{\prime \prime }}$ must be a $n-m-1$ degree polynomial. Since the
least-squares set $X''_{n-m}$ has cardinality $n-m$, $\hat{Q}_{X^{\prime \prime }}$ is the interpolating polynomial for $\hat{f}$ on $X''_{n-m}$ that is%
\begin{equation*}
\hat{Q}_{X^{\prime \prime }}(x''_{k,n-m})=\hat{f}%
(x''_{k,n-m}),~k=1,\ldots ,n-m.
\end{equation*}%
From the previous relation, it follows that%
\begin{eqnarray*}
\hat{P}_{X}(x''_{k,n-m}) &=&P_{X^{\prime }}(x''_{k,n-m})+\hat{Q}_{X^{\prime \prime }}(x''_{k,n-m})\omega
_{m}(x''_{k,n-m}) \\
&=&P_{X^{\prime }}(x''_{k,n-m})+\hat{f}(x''_{k,n-m})\omega _{m}(x''_{k,n-m}) \\
&=&P_{X^{\prime }}(x''_{k,n-m})+\frac{f(x''_{k,n-m})-P_{X^{\prime }}(x''_{k,n-m})}{\omega _{m}(x''_{k,n-m})}\omega _{m}(x''_{k,n-m}) \\
&=&f(x''_{k,n-m})
\end{eqnarray*}%
that is $\hat{P}_{X}$ interpolates $f$ on $X''_{n-m}$.
However, by construction $\hat{P}_{X}$ interpolates also $f$ on $%
X'_m$, then it coincides with the interpolating polynomial for $f$
on $X_n$ by the uniqueness of the interpolating polynomial of degree $n$ on $X_n$.
\end{proof}
\smallskip

By taking into account this result, let us come back to the choice of a proper
degree for $\hat{P}_{X}$. Clearly, it depends on the
degree of the simultaneous regression polynomial, namely of the polynomial $%
\hat{Q}_{X^{\prime \prime }}$. In order to determine a degree for\ $\hat{Q}%
_{X^{\prime \prime }}$ which gives, in the uniform norm, better accuracy of the constrained
mock-Chebyshev least-squares with respect to the mock-Chebyshev
interpolation we use a result presented by L.
Reichel in \cite{reichel1986polynomial}. This result implies that for an equispaced set of $q$\ (internal) nodes of $[-1,1]$%
\begin{equation}
z_{k}=-1+\frac{2k-1}{q},~k=1,\ldots ,q,  \label{nodireich}
\end{equation}%
the degree $p$ of the least-squares polynomial should be selected so that
there is a subset of cardinality $p+1$ of the equispaced set which is close,
in the mock-Chebyshev sense, to the $p+1$ Chebyshev grid. Actually, the
result presented in \cite{reichel1986polynomial} is more general since it deals with the
least-squares approximation of a function on a Jordan curve in the complex
plane. To explain the outlines of Reichel's idea we use his notation.
Let $\Gamma $ be a Jordan curve or Jordan arc in the complex plane and let $\Omega $ the open
set bounded by $\Gamma $. If $\Gamma $ is a Jordan arc then $\Omega$ is void. Let $%
\left\{ z_{k,q}\right\} _{k=1}^{q}$ be a set of $q$ distinct nodes on $\Gamma $.
For a given function $\varphi$ on $\Gamma ,$ let $L_{p,q}\varphi$ denote the
least-squares polynomial of degree $\leq p$ with respect to the semi-norm%
\begin{equation*}
\left\Vert \varphi\right\Vert :=(\varphi,\varphi)^{\frac{1}{2}}
\end{equation*}%
defined through the inner product%
\begin{equation*}
(\varphi,\psi):=\sum\limits_{k=1}^{q}\varphi(z_{k,q})\overline{\psi(z_{k,q})}.
\end{equation*}%
Moreover, let $I_{p}\varphi$ be the interpolating polynomial of $\varphi$ at $p+1$
distinct points $\left\{ w_{k,p}\right\} _{k=0}^{p}$ on $\Gamma .$ We write $I_{p}\prec L_{p,q}$ if $\left\{ w_{k,p}\right\} _{k=0}^{p}\subset \left\{
z_{k,q}\right\} _{k=1}^{q}$. We equip
the domain and the range of $L_{p,q}$ and $I_{p}$ with the uniform norm on $%
\Gamma $
\begin{equation*}
\left\Vert \varphi\right\Vert _{\Gamma }=\sup_{z\in \Gamma }\left\vert
\varphi(z)\right\vert
\end{equation*}%
and we denote the induced operator norm with the symbol $\left\Vert \cdot
\right\Vert $. Finally, we define
\begin{equation*}
E_{p}(\varphi):=\inf_{Q_{p}\in \mathcal{P}^{p}}\left\Vert \varphi-Q_{p}\right\Vert _{\Gamma }.
\end{equation*}%
The following theorem \cite[Theorem 2.1]{reichel1986polynomial} bounds the norm of the least-squares projection $L_{p,q}$ in terms of the norm of the interpolation projection $I_{p}$.

\begin{theorem}
\label{teoreich}
Let $L_{p,q}$ and $I_{p}$ be defined on the set of continuous function on $%
\Gamma \cup \Omega $ and analytic in $\Omega$. Then
\begin{equation}
\left\Vert L_{p,q}\right\Vert \leq \left\Vert I_{p}\right\Vert \left( 1+%
\sqrt{q}\sup_{\left\Vert \varphi\right\Vert _{\Gamma }=1}E_{p}(\varphi)\right) ,~\forall
I_{p}\prec L_{p,q},~\forall q\geq p.  \label{teo}
\end{equation}
\end{theorem}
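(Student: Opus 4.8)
The plan is to exploit that both $L_{p,q}$ and $I_p$ are \emph{projections} onto $\mathcal{P}^p$: every $Q\in\mathcal{P}^p$ satisfies $L_{p,q}Q=Q$ (the discrete least-squares residual of a polynomial against itself vanishes, and uniqueness forces the minimizer to be $Q$ itself) and $I_pQ=Q$ (interpolating a degree-$\le p$ polynomial at $p+1$ nodes reproduces it). Hence, for any admissible $\varphi$, the function $g:=L_{p,q}\varphi-I_p\varphi$ is again a polynomial in $\mathcal{P}^p$, and the triangle inequality applied to $L_{p,q}\varphi=g+I_p\varphi$ gives $\|L_{p,q}\varphi\|_\Gamma\le\|g\|_\Gamma+\|I_p\varphi\|_\Gamma\le\|g\|_\Gamma+\|I_p\|\,\|\varphi\|_\Gamma$. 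Everything is thus reduced to estimating $\|g\|_\Gamma$.

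To bound $\|g\|_\Gamma$ I would recover $g$ from its values at the interpolation nodes $\{w_{k,p}\}_{k=0}^{p}$: since $g\in\mathcal{P}^p$ one has $g=I_pg=\sum_{k=0}^{p}g(w_{k,p})\,\ell_k$ with $\ell_k$ the Lagrange fundamental polynomials of $\{w_{k,p}\}$, so $\|g\|_\Gamma\le\|I_p\|\,\max_{0\le k\le p}|g(w_{k,p})|$, because $\max_{z\in\Gamma}\sum_k|\ell_k(z)|$ is precisely the operator norm $\|I_p\|$. Now, since $I_p\varphi$ interpolates $\varphi$ at the $w_{k,p}$, we have $g(w_{k,p})=L_{p,q}\varphi(w_{k,p})-\varphi(w_{k,p})$, and here the hypothesis $I_p\prec L_{p,q}$ is used essentially: because $\{w_{k,p}\}\subset\{z_{k,q}\}$, the maximum of the $|g(w_{k,p})|$ is dominated by $\max_{1\le j\le q}\bigl|(L_{p,q}\varphi-\varphi)(z_{j,q})\bigr|$, and this $\ell^\infty$ quantity is in turn $\le\|L_{p,q}\varphi-\varphi\|$, the discrete seminorm attached to the least-squares problem.

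At that point I would invoke the defining optimality of $L_{p,q}\varphi$, namely $\|L_{p,q}\varphi-\varphi\|=\min_{Q\in\mathcal{P}^p}\|Q-\varphi\|\le\|Q_p^\ast-\varphi\|$, where $Q_p^\ast$ realizes $E_p(\varphi)$; bounding a sum of $q$ squares by $q$ times its largest term yields $\|Q_p^\ast-\varphi\|\le\sqrt{q}\,\|Q_p^\ast-\varphi\|_\Gamma=\sqrt{q}\,E_p(\varphi)$. Chaining the inequalities gives $\|L_{p,q}\varphi\|_\Gamma\le\|I_p\|\bigl(\|\varphi\|_\Gamma+\sqrt{q}\,E_p(\varphi)\bigr)$, and since $E_p$ is positively homogeneous, $E_p(\varphi)\le\|\varphi\|_\Gamma\,\sup_{\|\psi\|_\Gamma=1}E_p(\psi)$; dividing by $\|\varphi\|_\Gamma$ and taking the supremum over $\|\varphi\|_\Gamma=1$ produces (\ref{teo}). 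The hypothesis that the functions are continuous on $\Gamma\cup\Omega$ and analytic in $\Omega$ enters only through the maximum principle, which makes $\|\cdot\|_\Gamma$ a genuine norm on that space so that the operator norms are meaningful; it plays no role in the estimates themselves.

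The step I expect to be the crux is the passage in the second paragraph: realizing that writing $L_{p,q}\varphi=(L_{p,q}\varphi-I_p\varphi)+I_p\varphi$ isolates a degree-$\le p$ polynomial whose nodal values at $\{w_{k,p}\}$ can be controlled, and that the nesting $I_p\prec L_{p,q}$ is exactly what transfers control of those nodal values into the discrete $\ell^2$ seminorm, the only norm in which the least-squares projection is known to be near-optimal. The remaining ingredients — triangle inequalities, the Lagrange representation, and the elementary comparison between the $\ell^2$ and sup norms (which is also where the factor $\sqrt{q}$ originates) — are routine.
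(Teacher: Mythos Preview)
Your argument is correct and is precisely the approach of Reichel's original proof; note that the present paper does not reprove this theorem but only cites it from \cite{reichel1986polynomial}. The paper does, however, adapt the same scheme to the weighted setting in its Theorem~5.1 (expressing the least-squares polynomial in the Lagrange basis at the interpolation nodes, bounding each coefficient via the discrete least-squares residual, and controlling that residual by $E_p(\varphi)$ through optimality), and your write-up matches that template step for step.
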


By means of examples, it has been shown that also when $p$ is fixed the $\sqrt{q}$
growth of the right-hand side of (\ref{teo}) can be achieved. This suggests
to make further assumptions on the distribution of the interpolation nodes
and on the smoothness of the function. Generally, we will assume that $p$ is
an increasing function of $q$. Using a Jackson's theorem \cite[p. 147]{cheney1966introduction} the following
corollary \cite[Corollary 2.1]{reichel1986polynomial} shows that additional smoothness of the function to be
approximated decreases the growth of $\left\Vert L_{p,q}\right\Vert $with $%
q,p(q)$.

\begin{corollary}
\label{correich}
Let $\Gamma =[-1,1]$ and let $F_{d,k,\Gamma}:=\left\{
\varphi:\varphi\in C^{k}[-1,1],~\left\Vert \frac{d^{k}\varphi}{dz^{k}}\right\Vert _{\Gamma
}\leq d\right\} $ be the domain of $L_{p,q}$. Then for some constant $D$ depending on the constant $d$
and on the integer $k$%
\begin{equation*}
\left\Vert L_{p,q}\right\Vert \leq \left\Vert I_{p}\right\Vert \left( 1+D%
\sqrt{q}(p+1)^{-k}\right) ,~\forall I_{p}\prec L_{p,q}.
\end{equation*}
\end{corollary}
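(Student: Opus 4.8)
The statement to prove is Corollary \ref{correich}: for $\Gamma = [-1,1]$ and the domain class $F_{d,k,\Gamma}$ of functions $\varphi \in C^k[-1,1]$ with $\|\varphi^{(k)}\|_\Gamma \le d$, we have
$$\|L_{p,q}\| \le \|I_p\|\left(1 + D\sqrt{q}(p+1)^{-k}\right), \quad \forall I_p \prec L_{p,q}.$$

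The key available tool is Theorem \ref{teoreich}, which gives $\|L_{p,q}\| \le \|I_p\|(1 + \sqrt{q}\sup_{\|\varphi\|_\Gamma = 1} E_p(\varphi))$. So the whole task is to bound $\sup_{\|\varphi\|_\Gamma = 1} E_p(\varphi)$ over the restricted class $F_{d,k,\Gamma}$ — that is, to show $E_p(\varphi) \le D'(p+1)^{-k}$ for a suitable constant when $\|\varphi\|_\Gamma \le 1$ (or rather, when $\varphi \in F_{d,k,\Gamma}$). Let me think about whether the normalization is $\|\varphi\|_\Gamma = 1$ or $\varphi \in F_{d,k,\Gamma}$...

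The plan is to read off the estimate from Theorem~\ref{teoreich}, whose right--hand side already has exactly the shape $\|I_p\|\left(1+\sqrt{q}\,(\cdots)\right)$ appearing in the statement; the only thing to do is to bound the factor $\sup_{\|\varphi\|_\Gamma=1}E_p(\varphi)$ once the domain of $L_{p,q}$ is shrunk to the smoothness class $F_{d,k,\Gamma}$. Concretely, since $L_{p,q}$ is now considered as acting only on functions $\varphi\in C^k[-1,1]$ with $\left\|\tfrac{d^k\varphi}{dz^k}\right\|_\Gamma\le d$, the supremum entering (\ref{teo}) is taken over $\{\varphi\in F_{d,k,\Gamma}:\|\varphi\|_\Gamma=1\}$, and the whole content of the corollary is that membership in $F_{d,k,\Gamma}$ forces $E_p(\varphi)$ to decay like $(p+1)^{-k}$.

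First I would invoke a Jackson-type theorem in the form recalled in \cite[p.~147]{cheney1966introduction}: there is a constant $c_k$, depending only on the integer $k$, such that for every $\varphi\in C^k[-1,1]$ and every $p\ge k$
\begin{equation*}
E_p(\varphi)=\inf_{Q_p\in\mathcal{P}^p}\|\varphi-Q_p\|_\Gamma\ \le\ \frac{c_k}{(p+1)^{k}}\left\|\frac{d^k\varphi}{dz^{k}}\right\|_\Gamma .
\end{equation*}
For $\varphi\in F_{d,k,\Gamma}$ the last norm is at most $d$, hence $E_p(\varphi)\le c_k d\,(p+1)^{-k}$, and therefore, putting $D:=c_k d$,
\begin{equation*}
\sup_{\varphi\in F_{d,k,\Gamma},\ \|\varphi\|_\Gamma=1}E_p(\varphi)\ \le\ \frac{D}{(p+1)^{k}} .
\end{equation*}
(If one also wants the inequality for $p<k$, it is enough to enlarge the constant: there $E_p(\varphi)\le\|\varphi\|_\Gamma=1$ trivially, while $(p+1)^{-k}\ge k^{-k}$, so $D:=\max\{c_k d,\,k^{k}\}$ works for all $p$.)

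Then I would substitute this bound into (\ref{teo}): for every $I_p\prec L_{p,q}$ and every $q\ge p$,
\begin{equation*}
\|L_{p,q}\|\ \le\ \|I_p\|\left(1+\sqrt{q}\,\sup_{\|\varphi\|_\Gamma=1}E_p(\varphi)\right)\ \le\ \|I_p\|\left(1+D\sqrt{q}\,(p+1)^{-k}\right),
\end{equation*}
which is precisely the claimed inequality.

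The deduction is short because both substantial ingredients are available off the shelf: the operator-norm comparison of Theorem~\ref{teoreich} and the Jackson inequality. Accordingly there is no real obstacle, only bookkeeping; the point I would be most careful about is that, after restricting the domain of $L_{p,q}$ to $F_{d,k,\Gamma}$, the relevant quantity in (\ref{teo}) is the supremum of $E_p(\varphi)$ over the unit sphere \emph{intersected with that class}, and that it is the bound on $\left\|\tfrac{d^k\varphi}{dz^k}\right\|_\Gamma$, not any bound on $\|\varphi\|_\Gamma$ itself, that produces the $(p+1)^{-k}$ gain through Jackson's theorem; without the smoothness restriction one only has $\sup_{\|\varphi\|_\Gamma=1}E_p(\varphi)\approx 1$ and no decay in $p$.
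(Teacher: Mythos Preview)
Your proposal is correct and follows exactly the approach indicated in the paper: apply Jackson's theorem to obtain $E_p(\varphi)\le D(p+1)^{-k}$ for $\varphi\in F_{d,k,\Gamma}$ and substitute this bound into inequality~(\ref{teo}) of Theorem~\ref{teoreich}. The paper does not spell out a proof beyond the phrase ``Using a Jackson's theorem \cite[p.~147]{cheney1966introduction}'' (the corollary is quoted from \cite[Corollary~2.1]{reichel1986polynomial}), so your write-up is in fact more detailed than what appears there, but the substance is identical.
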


The next step is to determine a bound for $\min_{I_{p}\prec
L_{p,q}}\left\Vert I_{p}\right\Vert $. We do not discuss in detail the estimates calculated for $\left\Vert
I_{p}\right\Vert $ in \cite{reichel1986polynomial} but only mention that a useful bound
for $\min_{I_{p}\prec L_{p,q}}\left\Vert I_{p}\right\Vert $ is obtained when
the interpolation points are Fej\'{e}r points or points close to Fej\'{e}r
points. Let us recall that for a generic curve $\Gamma$ the Fej\'{e}r points
are defined as the image on $\Gamma$ of equispaced nodes onto the unit circle through a particular conformal mapping \cite{reichel1986polynomial}. In particular, if $\Gamma =[-1,1]$ the Chebyshev points are Fej\'{e}r points \cite[Example 3.1]{reichel1986polynomial}. The estimates obtained for $\left\Vert
I_{p}\right\Vert $ in \cite{reichel1986polynomial} suggest the following least-squares approximation method:
\begin{criterion}
Let $\Gamma =[-1,1]$. Given a function $\varphi\in F_{d,k,\Gamma}$ and $q$ least-squares nodes $\left\{ z_{k,q}\right\} _{k=1}^{q}$ on $\Gamma $, choose the degree of the approximating polynomial $L_{p,q}\varphi$
as the greatest $p$ such that $p+1$ points are close to $p+1$ Fej\'{e}r points.
\end{criterion}
When the $q$ nodes are equispaced like in (\ref{nodireich}) this means that the degree $p$ of the
least-squares approximant should be selected so that there are $p+1$ points among the
equispaced ones which are close to the $p+1$ Chebyshev nodes. In other
words, $p$ should be selected in the mock-Chebyshev sense.

In the case of simultaneous regression the least-squares nodes are those of $X''_{n-m}$ and therefore they are not equally spaced. However, when the cardinality of $X_n$ is sufficiently large we can approximate an equispaced grid with width $\geq 2h$, $h=\frac{2}{n}$ using nodes belonging to $X''_{n-m}$. In fact, the maximum distance between two consecutive
nodes of $X''_{n-m}$ is at most $2h$. To be aware of it, let us observe that the interval $I=\left[ x''_{1,n-m},x''_{n-m,n-m}\right]$ according to the mock-Chebyshev extraction is properly contained in $[-1,1]$ and symmetric with respect to the origin. Because of the choice of $m$ the first and the second node of
$X'_m$ are equal to $x_{0,n}$ and $x_{1,n}$, respectively, i.e. $X'_m=\left\{x_{0,n},x_{1,n},\dots\right\}$.
Moreover, we have
\begin{lemma}\label{lem1}
The first three nodes of $X_n$ belong to $%
X'_m$, i.e.
\begin{equation*}
X'_m=\left\{x_{0,n},x_{1,n},x_{2,n},\dots\right\}.\label{xsec}
\end{equation*}
\end{lemma}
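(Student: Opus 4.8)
The plan is to show that $x_{2,n}=-1+\tfrac{4}{n}$ is the node of $X_n$ lying closest to the third Chebyshev--Lobatto point $x_{2}^{CL}=-\cos\!\left(\tfrac{2\pi}{m}\right)$; since it has already been recorded that $x_{0,n}=x_{0}^{CL}=-1$ and that $x_{1,n}$ is the mock-Chebyshev image of $x_{1}^{CL}$, establishing this for $x_{2}^{CL}$ finishes the proof. The midpoints of $x_{2,n}$ with its grid neighbours $x_{1,n}$ and $x_{3,n}$ are $-1+\tfrac{3}{n}$ and $-1+\tfrac{5}{n}$, so $x_{2,n}\in X'_m$ is equivalent to
\begin{equation*}
\frac{3}{n}\le 1-\cos\!\left(\frac{2\pi}{m}\right)=2\sin^{2}\!\left(\frac{\pi}{m}\right)\le\frac{5}{n}.
\end{equation*}
I would then reduce this to an elementary estimate on $\tfrac{2\pi^{2}}{m^{2}}$ by sandwiching $2\sin^{2}(\pi/m)$ between $\tfrac{2\pi^{2}}{m^{2}}\bigl(1-\tfrac{\pi^{2}}{6m^{2}}\bigr)^{2}$ and $\tfrac{2\pi^{2}}{m^{2}}$, using $x-\tfrac{x^{3}}{6}\le\sin x\le x$ for $x\ge 0$.

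Next I would feed in the definition $m=\bigl\lfloor\tfrac{\pi}{\sqrt2}\sqrt n\bigr\rfloor$, which gives $\tfrac{\pi}{\sqrt2}\sqrt n-1<m\le\tfrac{\pi}{\sqrt2}\sqrt n$ and hence $\tfrac{\pi^{2}}{2}n-\sqrt2\,\pi\sqrt n+1<m^{2}\le\tfrac{\pi^{2}}{2}n$. From the right-hand bound, $\tfrac{2\pi^{2}}{m^{2}}\ge\tfrac{4}{n}$, which together with the lower sandwich and a mild lower bound on $m$ (hence on $n$) yields $2\sin^{2}(\pi/m)>\tfrac 3n$. From the left-hand bound, $\tfrac{2\pi^{2}}{m^{2}}<\tfrac{4}{n}\bigl(1-\tfrac{2\sqrt2}{\pi\sqrt n}+\tfrac{2}{\pi^{2}n}\bigr)^{-1}=\tfrac 4n\bigl(1+O(n^{-1/2})\bigr)$, which together with the upper sandwich gives $2\sin^{2}(\pi/m)<\tfrac 5n$ once $n$ is large enough. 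Making the two threshold conditions explicit produces a concrete $n_{0}$ beyond which both inequalities hold; the finitely many smaller admissible values of $n$ can be checked directly. Finally, $x_{2,n}$ is distinct from $x_{0,n}$ and $x_{1,n}$, so the extraction genuinely selects $x_{0,n},x_{1,n},x_{2,n}$, giving $X'_m=\{x_{0,n},x_{1,n},x_{2,n},\dots\}$.

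The delicate point is purely quantitative: the argument needs $\tfrac{2\pi^{2}}{m^{2}}$ to fall inside the window $\bigl(\tfrac 3n,\tfrac 5n\bigr)$, whose width is exactly one grid step $h=\tfrac 2n$, so there is no slack to absorb constants — both the $O(1/m^{2})$ Taylor remainder of the cosine and the $O(n^{-1/2})$ error introduced by the floor in the choice of $m$ must be bounded effectively, not merely asymptotically. Everything else (the reduction to the midpoint condition, the trigonometric sandwich, and the algebra with $m$) is routine.
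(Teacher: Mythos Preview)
Your approach is correct and rests on the same core estimate as the paper: a Taylor expansion of the relevant Chebyshev--Lobatto node combined with $m^{2}\approx\tfrac{\pi^{2}}{2}n$. The paper, however, bounds the \emph{increment} $x_{2}^{CL}-x_{1}^{CL}=2\sin\!\bigl(\tfrac{3\pi}{2m}\bigr)\sin\!\bigl(\tfrac{\pi}{2m}\bigr)<\tfrac{3\pi^{2}}{2m^{2}}\approx\tfrac{3}{n}$ (product-to-sum identity, then $\sin x<x$) and from this concludes --- with an appeal to a figure --- that $x_{2}^{CL}$ lands in the catchment interval of $x_{2,n}$. You instead localize $x_{2}^{CL}$ directly from the endpoint, writing $x_{2}^{CL}+1=2\sin^{2}(\pi/m)$ and sandwiching it between $\tfrac{3}{n}$ and $\tfrac{5}{n}$ via an explicit two-sided midpoint criterion. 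Your route is a bit more self-contained and rigorous --- the paper argues only the upper direction and is loose about the floor error in $m$ --- at the cost of the small-$n$ case check you flag at the end; the paper's version is shorter but relies on the picture to close the argument.
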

\begin{proof}
To prove that $x_{2,n}$ together with $x_{0,n},x_{1,n}$ has been taken during the
mock-Chebyshev extraction, we need to expand in Taylor series the
difference between the second and the third Chebyshev-Lobatto node%
\begin{eqnarray*}
x_{2}^{CL}-x_{1}^{CL} &=&-\cos \left( \frac{2\pi }{m}\right) +\cos \left(
\frac{\pi }{m}\right)=-2\sin \left( \frac{3\pi }{2m}\right) \sin \left( -\frac{\pi }{2m}\right)=2\frac{\pi }{2m}\frac{3\pi }{2m}+O\left( \frac{\pi ^{4}}{m^{4}}\right)<2\frac{\pi }{2m}\frac{3\pi }{2m}.
\end{eqnarray*}%
Recalling that $m$ is given by (\ref{m}) the previous difference can be
rounded up by $\frac{3}{n}$ and the thesis follows (see Figure \ref{fig1}).
\end{proof}
\begin{figure}
\setlength{\unitlength}{11cm}
\centering
\includegraphics[width=\unitlength]{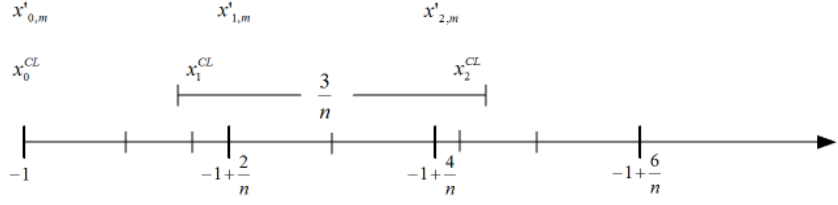}
\caption{Proof of Lemma \ref{lem1}}\label{fig1}
\end{figure}
\begin{lemma}\label{lem2}
For $n>7$, $x_{3,n}$ does not belong to $X'_m$, i.e.
\begin{equation*}
x_{3,n}\in X''_{n-m}, \quad n>7.
\end{equation*}
\end{lemma}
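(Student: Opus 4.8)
The plan is to mirror the proof of Lemma~\ref{lem1}: I would locate $x_3^{CL}$ precisely enough to show that it (and every later Chebyshev--Lobatto node) is strictly closer in $X_n$ to $x_{4,n}$ than to $x_{3,n}$, whereas the only earlier candidates $x_{0,n},x_{1,n},x_{2,n}$ have already been matched with $x_0^{CL},x_1^{CL},x_2^{CL}$ in Lemma~\ref{lem1}. Consequently $x_{3,n}$ is never picked during the mock-Chebyshev extraction, so $x_{3,n}\notin X'_m$, i.e.\ $x_{3,n}\in X''_{n-m}$.

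First I would reduce everything to the single inequality
\[
x_{3}^{CL}>-1+\frac{7}{n}.
\]
Since $-1+\frac{7}{n}=\frac12\bigl(x_{3,n}+x_{4,n}\bigr)$ is the midpoint of $x_{3,n}$ and $x_{4,n}$, this says that $x_3^{CL}$ lies strictly to the right of that midpoint, hence is strictly closer to $x_{4,n}$ than to $x_{3,n}$. Because $x_j^{CL}=-\cos(j\pi/m)$ is increasing in $j$ on $0\le j\le m$, the same holds for every $x_j^{CL}$ with $j\ge 3$; together with the fact (Lemma~\ref{lem1} and the derivation of (\ref{m})) that $x_0^{CL},x_1^{CL},x_2^{CL}$ have nearest nodes $x_{0,n},x_{1,n},x_{2,n}\ne x_{3,n}$, this shows no Chebyshev--Lobatto node has $x_{3,n}$ as its nearest equispaced node, which is the claim.

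It then remains to prove $x_3^{CL}>-1+\frac{7}{n}$, that is $1-\cos\frac{3\pi}{m}=2\sin^{2}\frac{3\pi}{2m}>\frac{7}{n}$. Here $n>7$ forces $n\ge 8$, and since $n\mapsto\lfloor\frac{\pi}{\sqrt2}\sqrt n\rfloor$ is nondecreasing, (\ref{m}) gives $m\ge\lfloor 2\pi\rfloor=6$, so $\frac{3\pi}{2m}\in(0,\frac{\pi}{4}]$. On this range $\sin t\ge t-\frac{t^{3}}{6}\ge\bigl(1-\frac{\pi^{2}}{96}\bigr)t$, whence $2\sin^{2}\frac{3\pi}{2m}\ge\frac{9\pi^{2}}{2m^{2}}\bigl(1-\frac{\pi^{2}}{96}\bigr)^{2}$; finally $m\le\frac{\pi}{\sqrt2}\sqrt n$ yields $\frac{9\pi^{2}}{2m^{2}}\ge\frac{9}{n}$, and since $9\bigl(1-\frac{\pi^{2}}{96}\bigr)^{2}>7$ the desired inequality follows.

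The delicate point is the narrowness of the final numerical margin, $9(1-\pi^{2}/96)^{2}\approx 7.24>7$: the cubic Taylor remainder in $\sin t$ has to be kept, and the bound $m\ge 6$ (hence the hypothesis $n>7$) is genuinely used --- the cruder estimate $\sin t\ge\frac{2}{\pi}t$ would only give about $\frac{3.6}{n}$ on the right-hand side, which is insufficient. A figure analogous to Figure~\ref{fig1}, showing the relative positions of $x_2^{CL},x_3^{CL},x_{3,n},x_{4,n}$, would make the geometry evident.
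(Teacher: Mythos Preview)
Your proof is correct and follows essentially the same route as the paper: both reduce the claim to the inequality $x_{3}^{CL}>-1+\tfrac{7}{n}$ (the midpoint of $x_{3,n}$ and $x_{4,n}$) and establish it via a Taylor-type lower bound on $-\cos(3\pi/m)$ combined with $m\le\tfrac{\pi}{\sqrt2}\sqrt n$. Your treatment of the remainder (via $\sin t\ge t-\tfrac{t^{3}}{6}$ on $(0,\tfrac{\pi}{4}]$, using $m\ge 6$) is in fact more rigorous than the paper's $O$-notation argument, and you make the nearest-node logic for $j\ge 3$ explicit, but the underlying idea is the same.
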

\begin{proof}
Let us expand $x_{3}^{CL}$\ in Taylor series%
\begin{eqnarray*}
x_{3}^{CL} &=&-\cos \left( \frac{3\pi }{m}\right) =-1+\frac{9\pi ^{2}%
}{2m^{2}}-\frac{81\pi ^{4}}{24m^{4}}+O\left( \left( \frac{3\pi }{m}\right)
^{6}\right)>-1+\frac{9}{n}-\frac{27}{2n^{2}}
\end{eqnarray*}%
and check for which values of $n\in
\mathbb{N}
$ the following inequality holds%
\begin{equation*}
-1+\frac{9}{n}-\frac{27}{2n^{2}}>-1+\frac{7}{n}.
\end{equation*}%
We obtain that
\begin{equation*}
n>\frac{27}{4}
\end{equation*}%
and therefore $\left|x_{3}^{CL}+1-\frac{6}{n}\right|>\left|x_{3}^{CL}+1-\frac{8}{n}\right|$.
\end{proof}

\bigskip
\begin{proposition}
For sufficiently large $n$ the following inequality
\begin{equation*}
\max_{2\leq i\leq n-m}\left\vert x''_{i-1,n-m}-x''_{i,n-m}\right\vert \leq 2h
\end{equation*}
holds.
\end{proposition}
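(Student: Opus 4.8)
The plan is to reduce the claim to a lower bound on the spacing of the Chebyshev--Lobatto nodes, using Lemmas \ref{lem1} and \ref{lem2} to handle the endpoints. Write $x_{i,n}=-1+ih$ with $h=2/n$. Since $X'_m$ and $X''_{n-m}$ partition $X_n$, two consecutive nodes of $X''_{n-m}$, say $x''_{i-1,n-m}=x_{a,n}$ and $x''_{i,n-m}=x_{a+k,n}$, are separated exactly by $x_{a+1,n},\dots,x_{a+k-1,n}$, all lying in $X'_m$; hence $|x''_{i-1,n-m}-x''_{i,n-m}|=kh$, and the maximal gap exceeds $2h$ if and only if $X'_m$ contains two consecutive nodes $x_{c,n},x_{c+1,n}$ of $X_n$. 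Now $X'_m$ does contain the runs $\{x_{0,n},x_{1,n},x_{2,n}\}$ and $\{x_{n-2,n},x_{n-1,n},x_{n,n}\}$ (Lemma \ref{lem1} and the symmetry about the origin), but these cannot create a gap of $X''_{n-m}$ larger than $2h$, since by Lemma \ref{lem2} the extreme nodes of $X''_{n-m}$ are $x_{3,n}$ and $x_{n-3,n}$ and both runs lie outside $I=[x_{3,n},x_{n-3,n}]$. So it suffices to show that, for $n$ large, $X'_m$ contains no pair $x_{c,n},x_{c+1,n}$ with $3\le c$ and $c+1\le n-3$.

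To do this I would use that $x_{c,n}\in X'_m$ precisely when some Chebyshev--Lobatto node $x_j^{CL}=-\cos(j\pi/m)$ lies within $h/2$ of $x_{c,n}$, i.e. is closer to $x_{c,n}$ than to any other node of $X_n$. Assume $x_{c,n},x_{c+1,n}\in X'_m$ with $3\le c\le n-4$, and pick $j$ maximal such that $x_j^{CL}$ rounds to $x_{c,n}$; since the $x_j^{CL}$ increase with $j$, monotonicity then forces $x_{j+1}^{CL}$ to round to $x_{c+1,n}$ (otherwise $x_{c+1,n}$ would be the nearest node of $X_n$ to no $x_i^{CL}$), whence $x_{j+1}^{CL}-x_j^{CL}\le\left(x_{c,n}+\tfrac{3h}{2}\right)-\left(x_{c,n}-\tfrac{h}{2}\right)=2h$. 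On the other hand $x_j^{CL}\ge x_{c,n}-h/2\ge -1+5/n$, whereas a Taylor expansion of $x_2^{CL}=-\cos(2\pi/m)$ together with $m=\left\lfloor\frac{\pi}{\sqrt2}\sqrt n\right\rfloor$ gives $x_2^{CL}\le -1+\tfrac{2\pi^2}{m^2}=-1+\tfrac4n+O(n^{-3/2})<-1+5/n$ for $n$ large; hence $j\ge 3$, and symmetrically $x_{j+1}^{CL}\le 1-5/n<x_{m-2}^{CL}$ forces $j\le m-3$. Thus the reduction comes down to the interior gap estimate
\[
g_j:=x_{j+1}^{CL}-x_j^{CL}>2h,\qquad 2\le j\le m-3,
\]
valid for $n$ large.

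For this last step I would write $g_j=2\sin\frac{(2j+1)\pi}{2m}\sin\frac{\pi}{2m}$ and observe that, $\sin$ being unimodal on $(0,\pi)$, the sequence $(g_j)$ is unimodal in $j$ and satisfies $g_j=g_{m-1-j}$; hence its minimum over $2\le j\le m-3$ equals $g_2=2\sin\frac{5\pi}{2m}\sin\frac{\pi}{2m}$. Bounding $\sin x\ge x-\tfrac{x^3}{6}$ and using $m^2\le\frac{\pi^2}{2}n$ gives
\[
g_2\ge\frac{5\pi^2}{2m^2}\left(1-O(m^{-2})\right)\ge\frac5n\left(1-O(n^{-1})\right),
\]
which is strictly larger than $2h=4/n$ once $n$ is large enough (compatibly with the hypothesis $n>7$ required by Lemmas \ref{lem1}--\ref{lem2}).

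The step I expect to be the real obstacle is the boundary reduction in the first paragraph: one must be certain that the ``interior'' Chebyshev index $j$ produced in the second paragraph really lies in $\{2,\dots,m-3\}$, i.e. that $x_2^{CL}$ is rounded to $x_{2,n}$ and $x_{m-2}^{CL}$ to $x_{n-2,n}$, and that no run of consecutive nodes of $X_n$ inside $X'_m$ other than the two endpoint triples can occur. This is exactly what the calibration (\ref{m}) of $m$ and Lemmas \ref{lem1}, \ref{lem2} are designed to guarantee; with it in hand, the interior estimate $g_j>2h$ is a routine computation.
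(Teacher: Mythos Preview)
Your argument is correct and follows essentially the same route as the paper: reduce the gap bound to the statement that no two consecutive nodes of $X_n$ lying in $I=[x_{3,n},x_{n-3,n}]$ belong to $X'_m$, then show that the smallest Chebyshev--Lobatto spacing in the interior exceeds $2h$. The only substantive difference is which gap you estimate: the paper bounds $g_3=x_4^{CL}-x_3^{CL}$ directly (obtaining $g_3>7/n-175/(6n^2)>4/n$ for $n>175/18$), whereas you bound the slightly smaller $g_2$; your choice still suffices, at the cost of a less explicit threshold on $n$. Your reduction is in fact argued more carefully than the paper's (you spell out why two consecutive selected equispaced nodes force $g_j\le 2h$ for some interior $j$), and the minor slip $j\le m-3$ versus the correct $j\le m-4$ is harmless since you then prove the stronger estimate over $2\le j\le m-3$.
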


\begin{proof}
The thesis is equivalent to the fact that among the nodes of $X'_m$ belonging to $I=\left[-1+\frac{6}{n},1-\frac{6}{n}\right]$ there are not two consecutive nodes of $X_n$.
By Lemma \ref{lem1} and Lemma \ref{lem2} the nodes of the $m+1$ Chebyshev-Lobatto grid which are contained in $I$
are%
\begin{equation}
x_{j}^{CL}=-\cos \left( \frac{\pi }{m}j\right) ,~j=3,\ldots ,m-3.~
\label{nodes}
\end{equation}%
It is well-known that the nodes (\ref{nodes}) are more dense near the endpoints of $
I $ and less near its center, therefore it is sufficient to verify that the distance between $x_{3}^{CL}$ and $%
x_{4}^{CL}$ is greater than $2h$. Let us expand in Taylor series $%
x_{4}^{CL}-x_{3}^{CL}$
\begin{eqnarray*}
x_{4}^{CL}-x_{3}^{CL} &=&-\cos \left( \frac{4\pi }{m}\right) +\cos \left(
\frac{3\pi }{m}\right)=-2\sin \left( \frac{7\pi }{2m}\right) \sin \left( -\frac{\pi }{2m}\right)
\\
&=&2\left( \frac{7\pi}{2m}-\left( \frac{7\pi }{2m}\right) ^{3}\frac{1}{6}%
+O\left( \left( \frac{7\pi }{2m}\right) ^{5}\right) \right) \left( \frac{\pi
}{2m}-\left( \frac{\pi }{2m}\right) ^{3}\frac{1}{6}+O\left( \left( \frac{%
7\pi }{2m}\right) ^{5}\right) \right)  \\
&=&\frac{7\pi ^{2}}{2m^{2}}-\frac{175\pi ^{4}}{24m^{4}}+O\left( \frac{\pi
^{6}}{m^{6}}\right)
\end{eqnarray*}
round downward by%
\begin{equation*}
\frac{7}{n}-\frac{175}{6n^{2}}<x_{4}^{CL}-x_{3}^{CL}
\end{equation*}%
and impose that
\begin{equation*}
\frac{4}{n}<\frac{7}{n}-\frac{175}{6n^{2}}.
\end{equation*}%
From the previous inequality it follows that%
\begin{equation*}
 n>\frac{175}{18}\simeq 9.72
\end{equation*}%
and the thesis holds.
\end{proof}
\bigskip

At this point we can apply the results presented in \cite{reichel1986polynomial} to the simultaneous regression. Taking into account that the grid (\ref{nodireich}) is equispaced in $\left[-1+\frac{1}{q},1-\frac{1}{q}\right]$ with width $\frac{2}{q}$, we note that, for $n$ sufficiently large, we can approximate such a grid with $q=\frac{n}{6}=\frac{1}{3h}$ and nodes coming from $X''_{n-m}$. We denote this grid with $\tilde{X}''_{n-m}$. The choice for the degree of the simultaneous
regression which gives good approximation in the uniform norm is therefore
\begin{equation}
p=\left\lfloor \frac{\pi }{\sqrt{2}}\sqrt{q}\right\rfloor =\left\lfloor
\frac{\pi }{\sqrt{2}}\sqrt{\frac{n}{6}}\right\rfloor .  \label{p}
\end{equation}%
Let us observe that since the degree of the mock-Chebyshev interpolation and
the degree of the regression are chosen in the same way, we can obtain the
previous result applying to $X''_{n-m}$ the idea explained in
\cite{boyd2009divergence}, that is imposing that
\begin{equation*}
-\cos \left( \frac{\pi }{p}\right) \simeq -1+\frac{6}{n}.
\end{equation*}%
It is a straightforward calculus to prove that $p$ will be like in (\ref{p}).

\section{Uniform norm estimation}

We have determined the degree $p$ as in (\ref{p}) for the polynomial $\hat{Q}_{X^{\prime \prime }}$
which, according to\ Reichel's theory, gives good approximation in the
uniform norm. Now, we want to calculate an estimation for the norm error $E_{\hat{P}_{X}}(f)=\left \Vert f-\hat{P}_{X}\right \Vert_\infty$ in the uniform norm. Let $\hat{P}_{X}:C[-1,1]\rightarrow \mathcal{P}^{r\ast }$ the projection operator which
associates to a continuous function in $[-1,1]$ its constrained
mock-Chebyshev polynomial and let $\hat{Q}_{X^{\prime \prime }}:C[-1,1]\rightarrow \mathcal{P}^{r-m-1}$ the projection
operator which associates to a continuous function in $[-1,1]$ its least-squares polynomial in the norm $\left\Vert \cdot \right\Vert _{2,\omega
_{m}^{2}}$.

As in the proof of Theorem \ref{teoreich} and Corollary \ref{correich} we get an estimate for the operator norm $\left\Vert \hat{Q}_{X^{\prime \prime }}\right\Vert $.

\begin{theorem}
Let $\varphi\in C[-1,1]$ and $I_{p}\varphi$ be the interpolating polynomial
of $\varphi$ on the $p+1$ mock-Chebyshev subset $X'''_p=\left\{ x_{k,p}^{\prime \prime \prime }\right\} _{k=0}^{p}$ of $\tilde{X}''_{n-m}$. Then
\[
\left\Vert \hat{Q}_{X^{\prime \prime }}\right\Vert \leq \left\Vert
I_{p}\right\Vert \left( 1+\frac{\left( \sum\limits_{k=1}^{n-m}w_{k}\right) ^{%
\frac{1}{2}}}{\min\limits_{j=0,...,p}\sqrt{\tilde{w}_{j}}}%
\sup_{\left\Vert \varphi\right\Vert _{\infty }=1}E_{p}(\varphi)\right).
\]
\end{theorem}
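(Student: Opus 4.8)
The plan is to transpose, essentially line by line, Reichel's proof of Theorem~\ref{teoreich} (and of Corollary~\ref{correich}) from the plain discrete inner product to the weighted inner product $(\cdot,\cdot)_{\omega_m^2}$ attached to $X''_{n-m}$. Two preliminary observations make this possible. First, by the choice of the degree in (\ref{p}) one has $r-m-1=p$, so $\hat{Q}_{X^{\prime\prime}}$ projects $C[-1,1]$ onto $\mathcal{P}^{p}$, which is exactly the space reproduced by the interpolation operator $I_{p}$ on the $p+1$ nodes of $X'''_p$. Second, by construction $X'''_p\subset\tilde{X}''_{n-m}\subset X''_{n-m}$, so the interpolation nodes form a subset of the least-squares nodes; this plays the role of the hypothesis $I_{p}\prec L_{p,q}$ in Theorem~\ref{teoreich} and is precisely what lets us pass from the supremum over $X'''_p$ of a function to its weighted discrete $2$-norm over $X''_{n-m}$.

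Fix $\varphi\in C[-1,1]$ and set $\hat{Q}:=\hat{Q}_{X^{\prime\prime}}\varphi\in\mathcal{P}^{p}$. Since $I_{p}$ fixes every polynomial of degree $\le p$ we have $I_{p}\hat{Q}=\hat{Q}$, hence by linearity
\[
\hat{Q}=I_{p}\varphi-I_{p}(\varphi-\hat{Q}).
\]
Taking the uniform norm on $[-1,1]$ and bounding the first summand by $\left\Vert I_{p}\varphi\right\Vert_{\infty}\le\left\Vert I_{p}\right\Vert\left\Vert\varphi\right\Vert_{\infty}$, we are left to estimate $\left\Vert I_{p}(\varphi-\hat{Q})\right\Vert_{\infty}$. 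Expanding $I_{p}(\varphi-\hat{Q})$ in the Lagrange basis $\{\ell_{j}\}_{j=0}^{p}$ of $X'''_p$ and bounding the Lebesgue function $\sum_{j}\left\vert\ell_{j}(t)\right\vert$ by the Lebesgue constant $\left\Vert I_{p}\right\Vert$ yields
\[
\left\Vert I_{p}(\varphi-\hat{Q})\right\Vert_{\infty}\le\left\Vert I_{p}\right\Vert\max_{0\le j\le p}\left\vert(\varphi-\hat{Q})(x_{j,p}^{\prime\prime\prime})\right\vert .
\]
Since each $x_{j,p}^{\prime\prime\prime}$ is a node of $X''_{n-m}$ (so it lies outside $X'_m$, where $\omega_m$ vanishes) carrying the positive weight $\tilde{w}_{j}=\omega_m^2(x_{j,p}^{\prime\prime\prime})$, the single term $\tilde{w}_{j}(\varphi-\hat{Q})^{2}(x_{j,p}^{\prime\prime\prime})$ is dominated by $\left\Vert\varphi-\hat{Q}\right\Vert_{2,\omega_{m}^{2}}^{2}$, whence $\max_{j}\left\vert(\varphi-\hat{Q})(x_{j,p}^{\prime\prime\prime})\right\vert\le\left\Vert\varphi-\hat{Q}\right\Vert_{2,\omega_{m}^{2}}\big/\min_{j}\sqrt{\tilde{w}_{j}}$.

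It remains to control $\left\Vert\varphi-\hat{Q}\right\Vert_{2,\omega_{m}^{2}}$. By minimality of $\hat{Q}$ in $\mathcal{P}^{p}$ for the norm $\left\Vert\cdot\right\Vert_{2,\omega_{m}^{2}}$, for every $Q_{p}\in\mathcal{P}^{p}$
\[
\left\Vert\varphi-\hat{Q}\right\Vert_{2,\omega_{m}^{2}}\le\left\Vert\varphi-Q_{p}\right\Vert_{2,\omega_{m}^{2}}\le\left(\sum_{k=1}^{n-m}w_{k}\right)^{1/2}\left\Vert\varphi-Q_{p}\right\Vert_{\infty},
\]
and taking the infimum over $Q_{p}$ gives $\left\Vert\varphi-\hat{Q}\right\Vert_{2,\omega_{m}^{2}}\le\left(\sum_{k=1}^{n-m}w_{k}\right)^{1/2}E_{p}(\varphi)$. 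Combining the three estimates, normalizing to $\left\Vert\varphi\right\Vert_{\infty}=1$, replacing $E_{p}(\varphi)$ by $\sup_{\left\Vert\varphi\right\Vert_{\infty}=1}E_{p}(\varphi)$ on the right-hand side, and taking the supremum over all such $\varphi$ on the left produces exactly the asserted inequality. I do not foresee a genuine obstacle: the argument is a direct adaptation of Reichel's, and the only steps that deserve attention are (i) checking that $\hat{Q}_{X^{\prime\prime}}\varphi$ and $I_{p}$ take values in the same space $\mathcal{P}^{p}$, which pins down the relation $r=m+p+1$ built into (\ref{p}), and (ii) tracking where the weights surface — namely $\sum_{k}w_{k}$ in place of Reichel's $q$ (through Cauchy--Schwarz applied to the weighted best-approximation error) and the extra factor $1/\min_{j}\sqrt{\tilde{w}_{j}}$ coming from isolating one weighted nodal value.
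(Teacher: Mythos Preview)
Your proof is correct and follows essentially the same route as the paper's: both use the minimality of $\hat{Q}_{X''}\varphi$ to bound $\|\varphi-\hat{Q}\|_{2,\omega_m^2}$ by $(\sum_k w_k)^{1/2}E_p(\varphi)$, then exploit $X'''_p\subset X''_{n-m}$ to control $|(\varphi-\hat{Q})(x'''_{j,p})|$ by this weighted norm divided by $\sqrt{\tilde w_j}$, and finally pass to the uniform norm via the Lagrange basis and the Lebesgue constant $\|I_p\|$. Your decomposition $\hat{Q}=I_p\varphi-I_p(\varphi-\hat{Q})$ is just a repackaging of the paper's explicit expansion $\hat{Q}_{X''}\varphi=\sum_j\alpha_j l_j$ together with the triangle inequality $|\hat{Q}_{X''}\varphi(t)|\le\sum_j|\alpha_j-\varphi(x'''_{j,p})||l_j(t)|+\sum_j|\varphi(x'''_{j,p})||l_j(t)|$.
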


\begin{proof}
Let $Q_{p}^{\ast }\varphi$ be the polynomial of degree $\leq p$ such that $%
E_{p}(\varphi)=\left\Vert \varphi-Q_{p}^{\ast }\varphi\right\Vert _{\infty }$%
. By (\ref{probl2})%
\[
\left\Vert \hat{Q}_{X^{\prime \prime }}\varphi-\varphi\right\Vert _{2,\omega
_{m}^{2}}\leq \left\Vert Q_{p}^{\ast }\varphi-\varphi\right\Vert _{2,\omega
_{m}^{2}}.
\]%
On the other hand,

\begin{equation}
\begin{array}{c}
\left\Vert Q_{p}^{\ast }\varphi-\varphi\right\Vert _{2,\omega
_{m}^{2}}=\left( \sum\limits_{k=1}^{n-m}w_{k}\left( Q_{p}^{\ast
}(x''_{k,n-m})-\varphi(x''_{k,n-m})\right)
^{2}\right) ^{\frac{1}{2}} \\
\leq \left( \sum\limits_{k=1}^{n-m}w_{k}\right) ^{\frac{1}{2}}\left\Vert
Q_{p}^{\ast }\varphi-\varphi\right\Vert _{\infty } \\
=\left( \sum\limits_{k=1}^{n-m}w_{k}\right) ^{\frac{1}{2}}E_{p}(\varphi).%
\end{array}
\label{eq3}
\end{equation}%
Let $l_{k}(t)$ $k=0,\dots,p$ be the elementary Lagrangian polynomials associated with $%
X'''_p$, that is
\[
I_{p}\varphi(t)=\sum\limits_{j=0}^{p}\varphi(x'''_{j,p})l_{j}(t).
\]%
Let us express $\hat{Q}_{X^{\prime \prime }}\varphi$ in the same basis as%
\[
\hat{Q}_{X^{\prime \prime }}\varphi(t)=\sum\limits_{j=0}^{p}\alpha
_{j}l_{j}(t),
\]%
for some coefficients $\alpha _{j}$. From (\ref{eq3}) it follows that%
\[
\sqrt{\widetilde{w}_{j}}\left\vert \alpha _{j}-\varphi(x'''_{j,p})\right\vert
\leq \left\Vert \hat{Q}_{X^{\prime \prime }}\varphi-\varphi\right\Vert
_{2,\omega _{m}^{2}}\leq \left( \sum\limits_{k=1}^{n-m}w_{k}\right) ^{\frac{1%
}{2}}E_{p}(\varphi),~j=0,\ldots ,p,
\]%
where $\widetilde{w}_{j},$ $j=0,\ldots ,p$ are the positive weights corresponding to
the nodes $\left\{ x'''_{k,p}\right\} _{k=0}^{p}$ and then%
\[
\left\vert \alpha _{j}-\varphi(x'''_{j,p})\right\vert \leq \frac{\left(
\sum\limits_{k=1}^{n-m}w_{k}\right) ^{\frac{1}{2}}}{\sqrt{\widetilde{w}_{j}}}%
E_{p}(\varphi).
\]%
Substituting the previous relation into%
\[
\left\vert \hat{Q}_{X^{\prime \prime }}\varphi(t)\right\vert \leq
\sum\limits_{j=0}^{p}\left\vert \alpha _{j}-\varphi(x'''_{j,p})\right\vert
\left\vert l_{j}(t)\right\vert +\sum\limits_{j=0}^{p}\left\vert
\varphi(x'''_{j,p})\right\vert \left\vert l_{j}(t)\right\vert,
\]%
we obtain%
\[
\left\Vert \hat{Q}_{X^{\prime \prime }}\right\Vert =\sup_{\left\Vert
\varphi\right\Vert _{\infty }=1}\left\Vert \hat{Q}_{X^{\prime \prime
}}\varphi\right\Vert _{\infty }\leq \left\Vert I_{p}\right\Vert \frac{\left(
\sum\limits_{k=1}^{n-m}w_{k}\right) ^{\frac{1}{2}}}{\min\limits_{j=0,...,p}%
\sqrt{\widetilde{w}_{j}}}\sup_{\left\Vert \varphi\right\Vert _{\infty
}=1}E_{p}(\varphi)+\left\Vert I_{p}\right\Vert
\]%
which proves the theorem.
\end{proof}

Recall that, fixed $\Gamma=[-1,1]$, according to \cite{reichel1986polynomial}, for each $k\in {\mathbb N}$ and $d>0$ we set
\begin{equation*}
F_{d,k,\Gamma}:=\left\{
\varphi :\varphi \in C^{k}[-1,1],~\left\Vert \frac{d^{k}\varphi }{dz^{k}}%
\right\Vert _{\Gamma }\leq d\right\}.
\end{equation*}
\begin{corollary}
\label{cornos}If $\hat{Q}_{X^{\prime \prime }}$ has domain $F_{d,k,\Gamma}$ there exists a constant $D$ depending
on $d$ and on the integer $k$ such that%
\begin{equation}
\left\Vert \hat{Q}_{X^{\prime \prime }}\right\Vert \leq \left\Vert
I_{p}\right\Vert \left( 1+D\frac{\left( \sum\limits_{k=1}^{n-m}w_{k}\right)
^{\frac{1}{2}}}{\min\limits_{j=0,...,p}\sqrt{\widetilde{w}_{j}}}(p+1)^{-k}%
\right).  \label{cor}
\end{equation}
\end{corollary}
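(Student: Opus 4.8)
The plan is to derive (\ref{cor}) from the preceding theorem in the same way that Corollary~\ref{correich} is obtained from Theorem~\ref{teoreich}: the only new ingredient is a quantitative Jackson estimate converting the factor $\sup E_{p}(\varphi)$ into a quantity that decays like $(p+1)^{-k}$ once the admissible functions are restricted to $F_{d,k,\Gamma}$.

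First I would observe that nothing in the proof of the preceding theorem uses more than continuity of $\varphi$ on $[-1,1]$: the Lagrange representation $\hat{Q}_{X^{\prime\prime}}\varphi=\sum_{j=0}^{p}\alpha_{j}l_{j}$, the estimate $\sqrt{\widetilde{w}_{j}}\,\bigl|\alpha_{j}-\varphi(x'''_{j,p})\bigr|\leq\|\hat{Q}_{X^{\prime\prime}}\varphi-\varphi\|_{2,\omega_{m}^{2}}\leq\bigl(\sum_{k=1}^{n-m}w_{k}\bigr)^{1/2}E_{p}(\varphi)$, and the splitting of $|\hat{Q}_{X^{\prime\prime}}\varphi(t)|$ into a term governed by the differences $\alpha_{j}-\varphi(x'''_{j,p})$ and a term governed by the values $\varphi(x'''_{j,p})$. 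Since $F_{d,k,\Gamma}\subset C[-1,1]$, this argument applies unchanged to functions $\varphi\in F_{d,k,\Gamma}$, and taking the supremum of $\|\hat{Q}_{X^{\prime\prime}}\varphi\|_{\infty}$ over $\{\varphi\in F_{d,k,\Gamma}:\|\varphi\|_{\infty}=1\}$ yields
\[
\left\Vert \hat{Q}_{X^{\prime\prime}}\right\Vert \leq \left\Vert I_{p}\right\Vert \left( 1+\frac{\left( \sum\limits_{k=1}^{n-m}w_{k}\right)^{\frac{1}{2}}}{\min\limits_{j=0,...,p}\sqrt{\widetilde{w}_{j}}}\sup_{\varphi\in F_{d,k,\Gamma},\,\left\Vert \varphi\right\Vert_{\infty }=1}E_{p}(\varphi)\right).
\]

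It then remains to bound the inner supremum. By Jackson's theorem \cite[p.~147]{cheney1966introduction} there is a constant $c_{k}$, depending only on $k$, such that $E_{p}(\varphi)\leq c_{k}(p+1)^{-k}\|\varphi^{(k)}\|_{\infty}$ for every $\varphi\in C^{k}[-1,1]$ (the finitely many indices $p<k$ being handled trivially by $E_{p}(\varphi)\leq\|\varphi\|_{\infty}=1$ after a possible enlargement of the constant). Because $\varphi\in F_{d,k,\Gamma}$ forces $\|\varphi^{(k)}\|_{\infty}\leq d$, this gives $\sup_{\varphi\in F_{d,k,\Gamma},\,\|\varphi\|_{\infty}=1}E_{p}(\varphi)\leq D(p+1)^{-k}$ with $D:=c_{k}d$ (enlarged as above), and inserting this into the last display produces exactly (\ref{cor}); the constant $D$ depends only on $d$ and $k$, as required.

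I do not anticipate a genuine obstacle here. The single point that needs attention is that $F_{d,k,\Gamma}$ is not scale invariant, so --- exactly as in the statements of Theorem~\ref{teoreich} and Corollary~\ref{correich} --- the normalization $\|\varphi\|_{\infty}=1$ must be retained inside the supremum that defines the operator norm on the restricted domain; it is precisely this normalization that makes the bound $\|\varphi^{(k)}\|_{\infty}\leq d$ exploitable through Jackson's inequality.
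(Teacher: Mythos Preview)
Your proposal is correct and follows exactly the same route as the paper: invoke Jackson's theorem \cite[p.~147]{cheney1966introduction} to bound $E_{p}(\varphi)\leq D(p+1)^{-k}$ for $\varphi\in F_{d,k,\Gamma}$ and substitute this into the estimate of the preceding theorem. Your write-up is in fact considerably more detailed than the paper's two-line proof, which simply states the Jackson bound and stops.
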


\begin{proof}
From a Jackson's theorem \cite[p. 147]{cheney1966introduction} for $\varphi\in F_{d,k,\Gamma}$ it follows
\[
E_{p}(\varphi)\leq D(p+1)^{-k}
\]%
where $D$ is a constant depending on $d$ and on the integer $k$.
\end{proof}

With these results in mind we can provide an estimate in the uniform norm for the
error of the constrained mock-Chebyshev least-squares.

\begin{theorem}
\label{teonos}Let $f\in F_{d,p,\Gamma}$. Then
\begin{equation}\label{est}
E_{\hat{P}_{X}}(f)\leq \left( 1+\left\Vert
I_{p}\right\Vert \left( 1+D\frac{\left( \sum\limits_{k=1}^{n-m}w_{k}\right)
^{\frac{1}{2}}}{\min\limits_{j=0,...,p}\sqrt{\widetilde{w}_{j}}}(p+1)^{-p}%
\right) \right) E_{p}(\hat{f})\left\Vert \omega _{m}\right\Vert _{\infty }.
\end{equation}
\end{theorem}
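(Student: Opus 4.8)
The plan is to reduce the uniform error of $\hat{P}_{X}$ to the uniform error committed by the weighted least-squares operator $\hat{Q}_{X^{\prime\prime}}$ on the auxiliary function $\hat{f}$, and then to quote Corollary \ref{cornos}. Starting from the defining identity $\hat{P}_{X}=P_{X^{\prime}}+\hat{Q}_{X^{\prime\prime}}\hat{f}\cdot\omega_{m}$ (formula (\ref{pcap})) and from $\hat{f}=(f-P_{X^{\prime}})/\omega_{m}$, a direct subtraction gives the factorisation
\[
f-\hat{P}_{X}=\bigl(f-P_{X^{\prime}}\bigr)-(\hat{Q}_{X^{\prime\prime}}\hat{f})\,\omega_{m}=\omega_{m}\,\bigl(\hat{f}-\hat{Q}_{X^{\prime\prime}}\hat{f}\bigr),
\]
which holds on the whole of $[-1,1]$ since $\hat{f}\,\omega_{m}=f-P_{X^{\prime}}$ by the definition of $\hat{f}$. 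Passing to the supremum norm yields $E_{\hat{P}_{X}}(f)\le\|\omega_{m}\|_{\infty}\,\|\hat{f}-\hat{Q}_{X^{\prime\prime}}\hat{f}\|_{\infty}$, so it remains only to estimate the residual of the least-squares projection of $\hat{f}$.

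For this residual I would invoke the elementary projection estimate. Since $\hat{Q}_{X^{\prime\prime}}$ is linear with range $\mathcal{P}^{p}$ and reproduces every polynomial of degree $\le p$ — this uses that the weighted least-squares problem on $\tilde{X}^{\prime\prime}_{n-m}$ has a unique solution that is exact on $\mathcal{P}^{p}$, i.e.\ $n-m\ge p+1$, which holds for $n$ large because $p=O(\sqrt{n})$ while $n-m=n-O(\sqrt{n})$ — for any $Q\in\mathcal{P}^{p}$ one has $\hat{f}-\hat{Q}_{X^{\prime\prime}}\hat{f}=(I-\hat{Q}_{X^{\prime\prime}})(\hat{f}-Q)$, whence
\[
\|\hat{f}-\hat{Q}_{X^{\prime\prime}}\hat{f}\|_{\infty}\le\bigl(1+\|\hat{Q}_{X^{\prime\prime}}\|\bigr)\,\|\hat{f}-Q\|_{\infty}.
\]
Taking $Q$ to be the polynomial of best uniform approximation of $\hat{f}$ of degree $\le p$ turns the last factor into $E_{p}(\hat{f})$. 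Next I would note that, because $f$ is analytic on a neighbourhood of $[-1,1]$ and $\omega_{m}$ divides $f-P_{X^{\prime}}$ exactly (both having simple zeros at the nodes of $X'_m$, by the interpolation conditions), $\hat{f}$ extends analytically across those nodes, so $\hat{f}\in C^{p}[-1,1]$ and $\hat{f}\in F_{\widehat{d},p,\Gamma}$ with $\widehat{d}:=\|\hat{f}^{(p)}\|_{\infty}$. Corollary \ref{cornos} with $k=p$ then gives
\[
\|\hat{Q}_{X^{\prime\prime}}\|\le\|I_{p}\|\Bigl(1+D\frac{\bigl(\sum\limits_{k=1}^{n-m}w_{k}\bigr)^{1/2}}{\min\limits_{j=0,\dots,p}\sqrt{\widetilde{w}_{j}}}(p+1)^{-p}\Bigr),
\]
with $D$ depending on $\widehat{d}$ and $p$. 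Substituting this into the previous inequality and multiplying through by $\|\omega_{m}\|_{\infty}$ produces precisely the bound (\ref{est}).

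The only steps that require more than bookkeeping are: (i) the exactness and uniqueness of $\hat{Q}_{X^{\prime\prime}}$ on $\mathcal{P}^{p}$, which rests on comparing the cardinality of $X^{\prime\prime}_{n-m}$ with the degree $p$ fixed in (\ref{p}) — already implicit in the analysis of Section 4; and (ii) the transfer of smoothness from $f$ to $\hat{f}$, where one must check that no derivatives are lost in forming the quotient $(f-P_{X^{\prime}})/\omega_{m}$ — this is exactly where the exact divisibility (equivalently, the mock-Chebyshev interpolation conditions) is used, after which the constant $D$ in (\ref{est}) merely absorbs the resulting bound on $\hat{f}^{(p)}$. Everything else is the triangle inequality applied to the two displayed estimates.
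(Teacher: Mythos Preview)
Your argument is essentially the same as the paper's: factor the error as $f-\hat{P}_{X}=\omega_{m}\bigl(\hat{f}-\hat{Q}_{X^{\prime\prime}}\hat{f}\bigr)$, apply the standard projection inequality $\|\hat{f}-\hat{Q}_{X^{\prime\prime}}\hat{f}\|_{\infty}\le(1+\|\hat{Q}_{X^{\prime\prime}}\|)E_{p}(\hat{f})$, and then invoke Corollary~\ref{cornos} with $k=p$ to bound $\|\hat{Q}_{X^{\prime\prime}}\|$. If anything, you are more explicit than the paper in justifying (i) that $\hat{Q}_{X^{\prime\prime}}$ reproduces $\mathcal{P}^{p}$ and (ii) that $\hat{f}$ inherits the required $C^{p}$ regularity from $f$ via the exact divisibility of $f-P_{X^{\prime}}$ by $\omega_{m}$; the paper simply states ``applying Corollary~\ref{cornos} to $f$'' without these remarks.
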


\begin{proof}
Let us start from the following relations
\begin{eqnarray*}
E_{\hat{P}_{X}}(f) &=&\left \Vert f-P_{X^{\prime}}f-\hat{Q}_{X^{\prime \prime }}\hat{f}\omega _{m} \right\Vert_\infty\\
&=&\left \Vert \frac{f-P_{X^{\prime}}f}{\omega _{m}}\omega _{m}-\hat{Q}_{X^{\prime \prime }}\left( \frac{f-P_{X^{\prime}}f}{\omega _{m}}\right)\omega _{m}\right \Vert_\infty \\
&\leq&E_{\hat{Q}_{X^{\prime \prime }}}\left( \frac{f-P_{X^{\prime}}f}{\omega _{m}%
}\right) \left\Vert\omega _{m}\right\Vert_\infty
\end{eqnarray*}%
where $E_{\hat{Q}_{X^{\prime \prime }}}\left( \frac{f-P_{X^{\prime}}f}{\omega
_{m}}\right)$ is the uniform norm error made in approximating $\hat{f}$ with its least-squares polynomial in the norm $\left\Vert \cdot \right\Vert _{2,\omega
_{m}^{2}}$. Since $\hat{Q}_{X^{\prime \prime }}$ is a projection operator
which reproduces the polynomials the following inequality holds%
\[
E_{\hat{Q}_{X^{\prime \prime }}}\left( \frac{f-P_{X^{\prime}}f}{%
\omega _{m}}\right)\leq \left( 1+\left\Vert \hat{Q}%
_{X^{\prime \prime }}\right\Vert \right) E_{p}(\hat{f})
\]%
where $E_{p}(\hat{f})=\min\limits_{Q\in \mathcal{P}^{p}}\left\Vert \hat{f}%
-Q\right\Vert _{\infty }$. Therefore%
\begin{eqnarray*}
E_{\hat{P}_{X}}(f)&\leq &\left( 1+\left\Vert \hat{Q}_{X^{\prime \prime }}\right\Vert \right)
E_{p}(\hat{f})\left\Vert \omega _{m}\right\Vert _{\infty }
\end{eqnarray*}%
which applying Corollary \ref{cornos} to $f$ gives the thesis.
\end{proof}

Theorem \ref{teonos} gives a sufficient condition to improve in the uniform norm the
accuracy of the mock-Chebyshev interpolation through the constrained
mock-Chebyshev least-squares.

\begin{corollary}
Let $f\in C^{m+1}[-1,1]$. If
\[
\left( 1+\left\Vert I_{p}\right\Vert \left( 1+D\frac{\left(
\sum\limits_{k=1}^{n-m}w_{k}\right) ^{\frac{1}{2}}}{\min\limits_{j=0,...,p}%
\sqrt{\widetilde{w}_{j}}}(p+1)^{-p}\right) \right) E_{p}(\hat{f})<\frac{%
\left\Vert f^{(m+1)}\right\Vert }{(m+1)!}
\]%
then%
\[
E_{\hat{P}_{X}}(f)<E_{P_{X^{\prime }}}(f)
\]%
where $E_{P_{X^{\prime }}}(f)=\left\Vert f-P_{X^{\prime }}\right\Vert_\infty$.
\end{corollary}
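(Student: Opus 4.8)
The plan is to bound $E_{\hat P_X}(f)$ from above by the estimate already proved in Theorem \ref{teonos}, to bound $E_{P_{X'}}(f)$ from below via the Lagrange remainder formula, and then to close the gap between the two using the hypothesis.

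First, observe that the degree of the simultaneous regression, $p=\left\lfloor\frac{\pi}{\sqrt2}\sqrt{n/6}\right\rfloor$, satisfies $p\le m<m+1$, so the assumption $f\in C^{m+1}[-1,1]$ gives $f\in F_{d,p,\Gamma}$ with $d=\left\Vert f^{(p)}\right\Vert_\Gamma$. Hence Theorem \ref{teonos} (which already incorporates Corollary \ref{cornos}) applies and yields
\[
E_{\hat P_X}(f)\le\left(1+\left\Vert I_p\right\Vert\left(1+D\frac{\left(\sum_{k=1}^{n-m}w_k\right)^{1/2}}{\min_{j=0,\dots,p}\sqrt{\widetilde w_j}}(p+1)^{-p}\right)\right)E_p(\hat f)\,\left\Vert\omega_m\right\Vert_\infty .
\]
Denote by $C$ the bracketed factor; it is exactly the coefficient of $E_p(\hat f)$ occurring in the hypothesis.

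Second, I would establish the lower bound $E_{P_{X'}}(f)\ge\frac{\left\Vert f^{(m+1)}\right\Vert}{(m+1)!}\left\Vert\omega_m\right\Vert_\infty$. By the definition (\ref{fcap}) of $\hat f$ we have $f-P_{X'}=\hat f\,\omega_m$; since $P_{X'}$ interpolates $f$ at the $m+1$ mock-Chebyshev nodes (which, by Lemma \ref{lem1} and the symmetry of $X'_m$, include the endpoints $\pm1$), the Lagrange remainder gives for every $t\in[-1,1]$ a point $\xi_t$ with $\hat f(t)=\frac{f^{(m+1)}(\xi_t)}{(m+1)!}$, hence $|f(t)-P_{X'}(t)|=\frac{|f^{(m+1)}(\xi_t)|}{(m+1)!}|\omega_m(t)|$. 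Evaluating this identity at a maximizer $t^{\ast}$ of $|\omega_m|$ and arguing that the associated intermediate point may be chosen where $|f^{(m+1)}|$ attains its norm yields the claimed inequality. Granting it, the hypothesis $C\,E_p(\hat f)<\frac{\left\Vert f^{(m+1)}\right\Vert}{(m+1)!}$ finishes the proof:
\[
E_{\hat P_X}(f)\le C\,E_p(\hat f)\,\left\Vert\omega_m\right\Vert_\infty<\frac{\left\Vert f^{(m+1)}\right\Vert}{(m+1)!}\left\Vert\omega_m\right\Vert_\infty\le E_{P_{X'}}(f).
\]

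The step I expect to be the main obstacle is precisely this lower bound on $E_{P_{X'}}(f)$: the remainder formula only controls $|f^{(m+1)}|$ at the interior point $\xi_{t^{\ast}}$ attached to the maximizer of $|\omega_m|$, which in general need not be where $\left\Vert f^{(m+1)}\right\Vert$ is realized. To make the argument rigorous one would have to exploit the continuity of the map $t\mapsto\hat f(t)$ on $[-1,1]$ together with the fact that the mock-Chebyshev nodes span the whole interval — aiming for a sweeping/intermediate-value argument on the points $\xi_t$ — or else impose a mild size assumption on $f^{(m+1)}$, or read $\frac{\left\Vert f^{(m+1)}\right\Vert}{(m+1)!}\left\Vert\omega_m\right\Vert_\infty$ as the standard benchmark for the mock-Chebyshev interpolation error. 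Everything else is a direct substitution of Theorem \ref{teonos} followed by a multiplication by $\left\Vert\omega_m\right\Vert_\infty$.
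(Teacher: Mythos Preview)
Your strategy is exactly the paper's: invoke Theorem~\ref{teonos} to bound $E_{\hat P_X}(f)$ from above by $C\,E_p(\hat f)\,\|\omega_m\|_\infty$, then compare this with the Lagrange remainder for $E_{P_{X'}}(f)$. Where you depart from the paper is in trying to turn the Lagrange estimate into a \emph{lower} bound $E_{P_{X'}}(f)\ge \frac{\|f^{(m+1)}\|}{(m+1)!}\|\omega_m\|_\infty$. You are right to flag this as the delicate point, and your attempted justification does not go through: the remainder formula only produces $|f^{(m+1)}(\xi_{t^\ast})|$ at \emph{some} intermediate point, and there is no mechanism (intermediate-value or otherwise) forcing $\xi_{t^\ast}$ to be a maximizer of $|f^{(m+1)}|$. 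In general the reverse inequality is simply false.

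What is worth knowing is that the paper's own proof does not close this gap either. It records only the \emph{upper} bound
\[
E_{P_{X'}}(f)\le \frac{\|f^{(m+1)}\|}{(m+1)!}\,\|\omega_m\|_\infty
\]
and then asserts that the thesis follows from Theorem~\ref{teonos}. Read literally, that chain gives
\[
E_{\hat P_X}(f)\;<\;\frac{\|f^{(m+1)}\|}{(m+1)!}\,\|\omega_m\|_\infty\;\ge\;E_{P_{X'}}(f),
\]
from which $E_{\hat P_X}(f)<E_{P_{X'}}(f)$ does not follow. So the ``benchmark'' reading you suggest at the end --- interpreting $\frac{\|f^{(m+1)}\|}{(m+1)!}\|\omega_m\|_\infty$ as the standard error budget for mock-Chebyshev interpolation rather than as a rigorous lower bound for $E_{P_{X'}}(f)$ --- is in fact the intended one, and your write-up is already more careful about this point than the paper's.
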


\begin{proof}
Let us recall that the error in the Lagrange interpolation can be bounded
as follows%
\[
E_{P_{X^{\prime }}}(f)\leq \frac{\left\Vert
f^{(m+1)}\right\Vert }{(m+1)!}\left\Vert \omega _{m}\right\Vert _{\infty }.
\]%
From Theorem \ref{teonos} we get the thesis.
\end{proof}

Finally, the following corollary shows that the operator $\hat{P}_{X}$ reproduces polynomials of degree $\leq m+p$.
\begin{corollary}
If $f=p_r$ with $p_r\in\mathcal{P}^{m+p}$, then
\begin{equation*}
\hat{P}_{X}f=f.
\end{equation*}
\end{corollary}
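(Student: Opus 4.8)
The plan is to exploit the structure $\hat{P}_X = P_{X'} + \hat{Q}_{X''}\omega_m$ together with the fact, established in the uniqueness theorem, that $\hat{Q}_{X''}$ is the least-squares polynomial of degree $\leq p = r-m-1$ for $\hat{f} = (f - P_{X'})/\omega_m$ in the weighted norm $\|\cdot\|_{2,\omega_m^2}$. First I would suppose $f = p_r$ with $p_r \in \mathcal{P}^{m+p}$ and observe that since $\hat{P}_X$ interpolates $f$ on $X'_m$ by construction, the difference $f - P_{X'}$ vanishes at every node of $X'_m$; hence $f - P_{X'}$ is divisible by $\omega_m(t) = \prod_{i=0}^m (t - x'_{i,m})$, and the quotient $\hat{f} = (f - P_{X'})/\omega_m$ is a genuine polynomial, not merely the formally-defined function of (\ref{fcap}). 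Counting degrees, $\deg(f - P_{X'}) \leq m+p$ and $\deg \omega_m = m+1$, so $\deg \hat{f} \leq p - 1 \leq p$.

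The key step is then that $\hat{Q}_{X''}$ reproduces polynomials of degree $\leq p$: since $\hat{f} \in \mathcal{P}^p$ and $\hat{Q}_{X''}\hat{f}$ is by definition the minimiser of $\|\hat{f} - Q\|_{2,\omega_m^2}$ over $Q \in \mathcal{P}^p$, the minimum is attained (with value zero) at $Q = \hat{f}$ itself, and by the uniqueness of the least-squares solution we get $\hat{Q}_{X''}\hat{f} = \hat{f}$. Substituting back,
\begin{equation*}
\hat{P}_X f = P_{X'}f + \hat{Q}_{X''}\hat{f}\,\omega_m = P_{X'}f + \hat{f}\,\omega_m = P_{X'}f + (f - P_{X'}f) = f.
\end{equation*}

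The only point requiring a little care — the main (mild) obstacle — is the degree bookkeeping at the boundary case $\deg(f - P_{X'}) = m+p$: one must check that $r = m+p$ is an admissible choice, i.e. that $p = r - m - 1$ is the degree of $\hat{Q}_{X''}$ consistent with the construction, so that $\hat{f}$ of degree $\leq p-1$ indeed lies in the space $\mathcal{P}^p$ over which the least-squares problem is posed; this is immediate. One should also note that if $\deg p_r < m$ then $P_{X'}f = p_r$ already and $\hat{f} = 0$, so the conclusion is trivial in that subcase. Assembling these observations gives the claim.
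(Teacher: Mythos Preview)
Your proof is correct and, in fact, slightly more direct than the paper's. Both arguments rest on the same key observation: when $f=p_r\in\mathcal{P}^{m+p}$, the numerator $f-P_{X'}f$ vanishes on $X'_m$ and hence is divisible by $\omega_m$, so $\hat f$ is a genuine polynomial of degree at most $p-1$ (or is identically zero when $\deg p_r\le m$). From there the two proofs diverge. You argue directly from the definition of the weighted least-squares projection: since $\hat f\in\mathcal{P}^{p}$, the minimum of $\|\hat f-Q\|_{2,\omega_m^2}$ over $Q\in\mathcal{P}^{p}$ is zero, attained uniquely at $Q=\hat f$, whence $\hat Q_{X''}\hat f=\hat f$ and $\hat P_X f=f$. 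The paper instead feeds $E_p(\hat f)=0$ into the uniform-norm estimate of Theorem~\ref{teonos} to conclude $\|f-\hat P_X f\|_\infty=0$. Your route is self-contained and avoids the machinery of Section~5; the paper's route has the advantage of exhibiting the corollary as an immediate byproduct of the error bound just proved.

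One small remark: your final paragraph about ``$r=m+p$ being an admissible choice, i.e.\ $p=r-m-1$'' is notationally tangled (with $r=m+p$ one gets $r-m-1=p-1$, not $p$), and in any case the worry is unnecessary. The regression space is $\mathcal{P}^{p}$ by construction, and you have already shown $\deg\hat f\le p-1$, so $\hat f$ lies in that space with room to spare; no boundary-case analysis is needed.
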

\begin{proof}
If $f=p_{r}$ with $r\leq m$%
\begin{equation*}
\hat{f}(t)=\frac{p_{r}(t)-P_{X^{\prime }}p_{r}(t)}{\omega _{m}(t)}=\frac{%
p_{r}^{(m+1)}(\xi _{t})}{(m+1)!}\equiv 0.
\end{equation*}
If $f=p_{r}$ with $m<r\leq m+p$%
\begin{equation*}
\hat{f}(t)=\frac{p_{r}(t)-P_{X^{\prime }}p_{r}(t)}{\omega _{m}(t)}
\end{equation*}
is a polynomial of degree $r-(m+1)$.
In both cases $E_{p}(\hat{f})=0$ and the right-hand side of (\ref{est}) is zero.
\end{proof}

\section{Numerical results}
We finally carried out a series of numerical tests to compare, in the uniform norm, the approximation of the constrained mock-Chebyshev least-squares and the mock-Chebyshev interpolation.
A first set of test functions includes the following ones (the first three functions were already considered in \cite{berzins2007adaptive}):%
\begin{equation*}
\begin{array}{l}
f_{1}(t)=\sqrt{\left\vert t\right\vert }, \\
\\
f_{2}(t)=\frac{1}{1+25t^{2}}, \\
\\
f_{3}(t)=\frac{10^{-15}}{10^{-15}+25t^{2}}, \\
\\
f_{4}(t)=t\left\vert t\right\vert, \\
\end{array}%
~~{\normalsize t\in \lbrack -1,1]}.
\end{equation*}%
The function $f_{1}$ is H\"{o}lder continuous with exponent $1/2$, the function $f_{3}$ is a modification of $f_{2}$ obtained by introducing the exponential $10^{-15}$ in order to squash $f_{2}$ on $x$ and $y$ axes, the function $f_{4}$ is of class $C^1$. The errors are computed as the maximum absolute value of the difference between the approximant and the exact function at $10001$ equispaced points in $[-1,1]$. Let us rename with $p$
the degree of the simultaneous regression polynomial $\hat{Q}_{X^{\prime
\prime }}$.
\begin{table}[h]
\centering{\scriptsize
\begin{tabular}{ccccc}
\hline
$p$ & $E_{\hat{P}_{X}}(f_{1})$ & $E_{\hat{P}_{X}}(f_{2})$ & $%
E_{\hat{P}_{X}}(f_{3})$ & $E_{\hat{P}_{X}}(f_{4})$ \\ \hline
\multicolumn{1}{l}{} & \multicolumn{1}{l}{} & \multicolumn{1}{l}{} &
\multicolumn{1}{l}{} &  \\
$\vdots $ & $\vdots $ & $\vdots $ & $\vdots $ & $\vdots $ \\
$28$ & \multicolumn{1}{l}{$\textcolor[rgb]{0.00,0.98,0.00}{7.9726586e-002}$}
& \multicolumn{1}{l}{$\textcolor[rgb]{0.00,0.98,0.00}{9.7493857e-009}$} &
\multicolumn{1}{l}{$\textcolor[rgb]{0.00,0.98,0.00}{9.9994994e-001}$} &
\multicolumn{1}{l}{$\textcolor[rgb]{0.00,0.98,0.00}{5.4308526e-005 }$} \\
$29$ & $7.8915085e-002$ & $8.5899644e-009$ & $9.9994769e-001$ & $%
5.4308526e-005$ \\
$\vdots $ & $\vdots $ & $\vdots $ & $\vdots $ & $\vdots $ \\
$33$ & $7.7268588e-002$ & $6.2480174e-009$ & $9.9994276e-001$ & $%
4.8879070e-005$ \\
$34$ & $7.7268642e-002$ & $6.2483426e-009$ & $9.9994277e-001$ &
\multicolumn{1}{l}{$\textcolor[rgb]{0.98,0.00,0.00}{4.6554802e-005}$} \\
$35$ & $7.7593676e-002$ & $7.6886833e-009$ & $9.9994378e-001$ & $%
4.6554852e-005$ \\
$36$ & $7.7593662e-002$ & $7.6886787e-009$ & $9.9994377e-001$ & $%
4.8513243e-005$ \\
$37$ & $7.6667437e-002$ & \multicolumn{1}{l}{$%
\textcolor[rgb]{0.98,0.00,0.00}{5.8468658e-009}$} & $9.9994084e-001$ & $%
4.8512907e-005$ \\
$38$ & $7.6667394e-002$ & $5.8470333e-009$ & $9.9994083e-001$ & $%
5.0626752e-005$ \\
$\vdots $ & $\vdots $ & $\vdots $ & $\vdots $ & $\vdots $ \\
$47$ & $7.5926645e-002$ & $7.2563305e-009$ & $9.9993836e-001$ & $%
7.8662677e-005$ \\
$48$ & \multicolumn{1}{l}{$\textcolor[rgb]{0.98,0.00,0.00}{7.5926555e-002}$}
& \multicolumn{1}{l}{$7.2566879e-009$} & \multicolumn{1}{l}{$9.9993834e-001$}
& $8.3106886e-005$ \\
$49$ & $7.6081471e-002$ & $8.0118418e-009$ & $9.9993892e-001$ & $%
8.3106580e-005$ \\
$\vdots $ & $\vdots $ & $\vdots $ & $\vdots $ & $\vdots $ \\
$59$ & $9.8058844e-002$ & $9.7826094e-009$ & $9.9993832e-001$ & $%
1.2059132e-004$ \\
$60$ & $9.8061139e-002$ & $9.7829010e-009$ & \multicolumn{1}{l}{$%
\textcolor[rgb]{0.98,0.00,0.00}{9.9993831e-001}$} & $1.2342356e-004$ \\
$61$ & $1.0514604e-001$ & $1.1889342e-008$ & $9.9993920e-001$ & $%
1.2342492e-004$ \\
$\vdots $ & $\vdots $ & $\vdots $ & $\vdots $ & $\vdots $ \\
$99$ & $3.5158374e-001$ & $2.9978376e-008$ & $3.0304570e+000$ & $%
3.8993185e-004$ \\
$100$ & $3.5157737e-001$ & $2.9977317e-008$ & $3.0304057e+000$ & $%
4.0022643e-004$ \\
\multicolumn{1}{l}{} & \multicolumn{1}{l}{} & \multicolumn{1}{l}{} &
\multicolumn{1}{l}{} &  \\ \hline
\multicolumn{1}{l}{} & $E_{P_{X^{\prime }}}(f_{1})$ & $E_{P_{X^{\prime }}}(f_{2})$ & $%
E_{P_{X^{\prime }}}(f_{3})$ & $E_{P_{X^{\prime }}}(f_{4})$ \\ \hline
\multicolumn{1}{l}{} & \multicolumn{1}{l}{} & \multicolumn{1}{l}{} &
\multicolumn{1}{l}{} &  \\
\multicolumn{1}{l}{} & \multicolumn{1}{l}{$%
\textcolor[rgb]{0.00,0.00,0.98}{8.7569583e-002}$} & \multicolumn{1}{l}{$%
\textcolor[rgb]{0.00,0.00,0.98}{8.9863528e-007}$} & \multicolumn{1}{l}{$%
\textcolor[rgb]{0.00,0.00,0.98}{9.9996656e-001}$} & \multicolumn{1}{l}{$%
\textcolor[rgb]{0.00,0.00,0.98}{1.5095571e-004}$}%
\end{tabular}}
\caption{Comparison between $E_{\hat{P}_{X}}(f_{i})$ and $E_{P_{X^{\prime }}}(f_{i})$ for $n=1000$. In this case $m=70$, $p^{\ast }=28$.} \label{tabella}
\end{table}
In Table \ref{tabella} $p$ ranges from $p=28$ to $p=100$. We denote with $p^{\ast }$ the degree of the simultaneous regression which, according to the theory explained above, gives good approximation in
the uniform norm. Table \ref{tabella} allows to compare the two errors of interest in the case of $n+1=1001$ equispaced interpolation nodes. At the
top of the table, in green, is highlighted the error $E_{\hat{P}_{X}}(f_{i})$ in correspondence of the degree $p^{\ast }$.
In red is highlighted the minimum possible error $E_{\hat{P}_{X}}(f_{i})$ in the range $[1,n-m-1]$. At the bottom,
in blue, is represented the error $E_{P_{X^{\prime }}}(f_{i})$. As we can see, the constrained mock-Chebyshev least-squares improve the accuracy of the approximation of the mock-Chebyshev interpolation. We note that
in correspondence of the degree $p^{\ast }$ we obtain an improvement of the accuracy of approximation. More in detail, for $f_{1}$ there is an interval for $p$ in which the
approximation obtained with our method is better than the one coming from
the mock-Chebyshev interpolation. In this case the improvement involves only
the coefficients. When the function to be approximated is the Runge function,
our approximation is everywhere more accurate for $p$ ranging from $1$ to $%
100$. In particular, there is a range for $p$ in which we get $2$ digits of precision
more than the mock-Chebyshev interpolation and $p^{\ast }$ lies in this range. For $f_{3}$
our approximation is, up to a certain value, better but almost the same of the
approximation obtained with the mock-Chebyshev interpolation and then gets little worse. In the case of $f_{4}$ there is an interval for $p$ in which we get $1$ digits of precision more than the mock-Chebyshev interpolation.

We have done further tests using the Runge function and the following ones:
\begin{equation*}
\begin{array}{l}
\\
f_{5}(t)=\frac{1}{t^{2}-(1+0.5)},  \\
\\
f_{6}(t)=\frac{1}{t^{4}+\left( \frac{\sqrt{26}}{5}-1\right) t^{2}+\left(
\frac{13}{50}\right) ^{2}},\\
\\
f_{7}(t)=\frac{1}{t^4+\left(\frac{2}{50}\right)^{2}},
\end{array}%
~~t\in \lbrack -1,1],
\end{equation*}
which, as the Runge function, are analytic in the interval $[-1,1]$. The function $f_{5}$ has poles at $\pm\sqrt{1+0.5}$, while the function $f_{6}$ has poles at $\frac{1}{5}\pm i\frac{1}{10}$ and $-\frac{1}{5}\pm i\frac{1}{10}$ and the function $f_{7}$ has poles at $\frac{1}{5\sqrt{2}}\pm i\frac{1}{5\sqrt{2}}$ and $-\frac{1}{5\sqrt{2}}\pm i\frac{1}{5\sqrt{2}}$.
\begin{figure}[htb]
\centering
\begin{minipage}[l]{.40\textwidth}
\includegraphics[width=7cm, height=5cm]{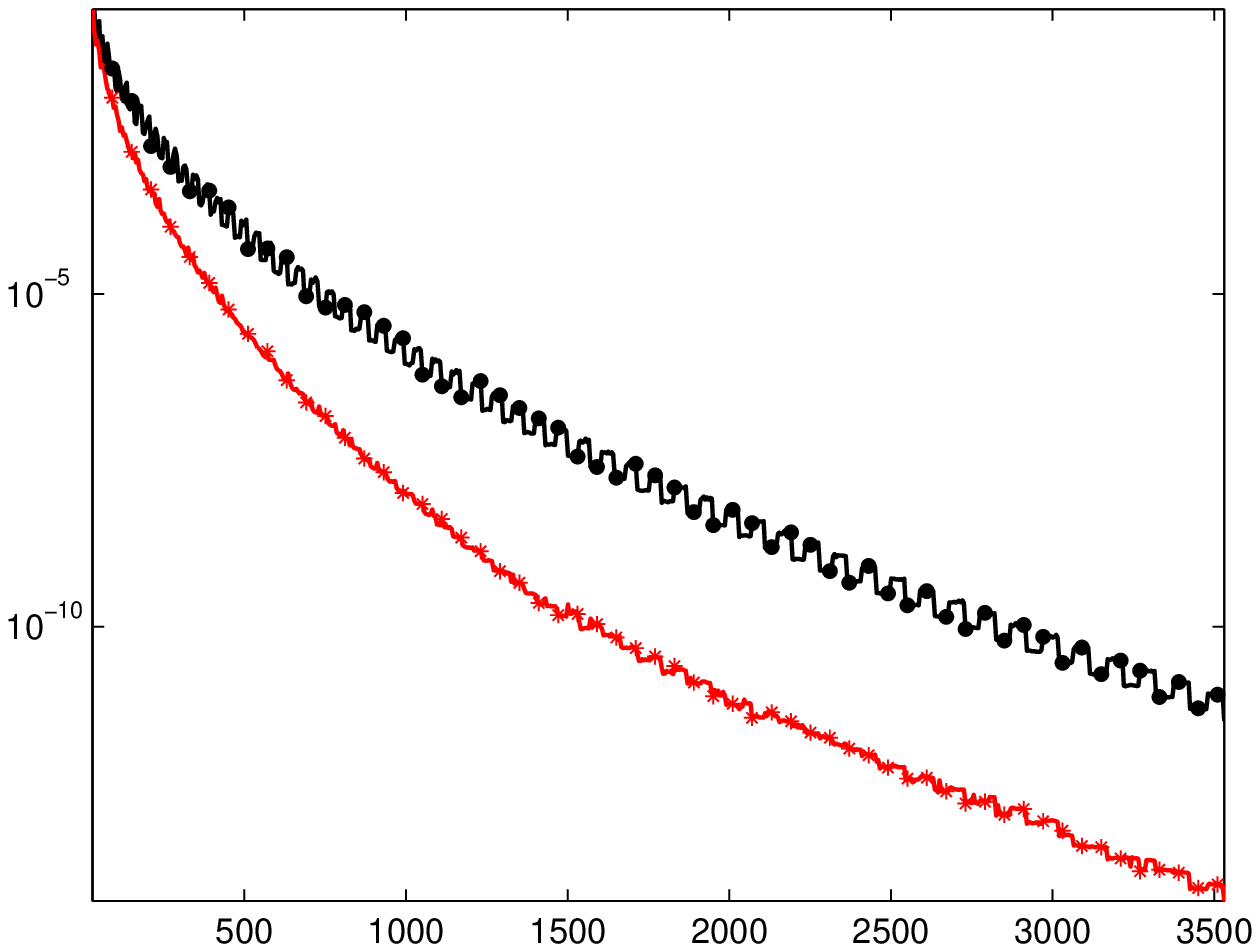}
\caption{Comparison between $E_{\hat{P}_{X}}(f_{2})$ (\textcolor[rgb]{0.98,0.00,0.00}{$*$}) (lower curve) and $E_{P_{X^{\prime }}}(f_{2})$ (\textcolor[rgb]{0.00,0.00,0.00}{$\bullet$}) (upper curve) for $30\leq n\leq 3530$. When $n=3530$, $\deg(\hat{P}_{X}f_{2})=m+p^{*}=131+53$.}\label{runge}
\end{minipage}%
\hspace{10mm}%
\begin{minipage}[l]{.40\textwidth}
\includegraphics[width=7cm, height=5cm]{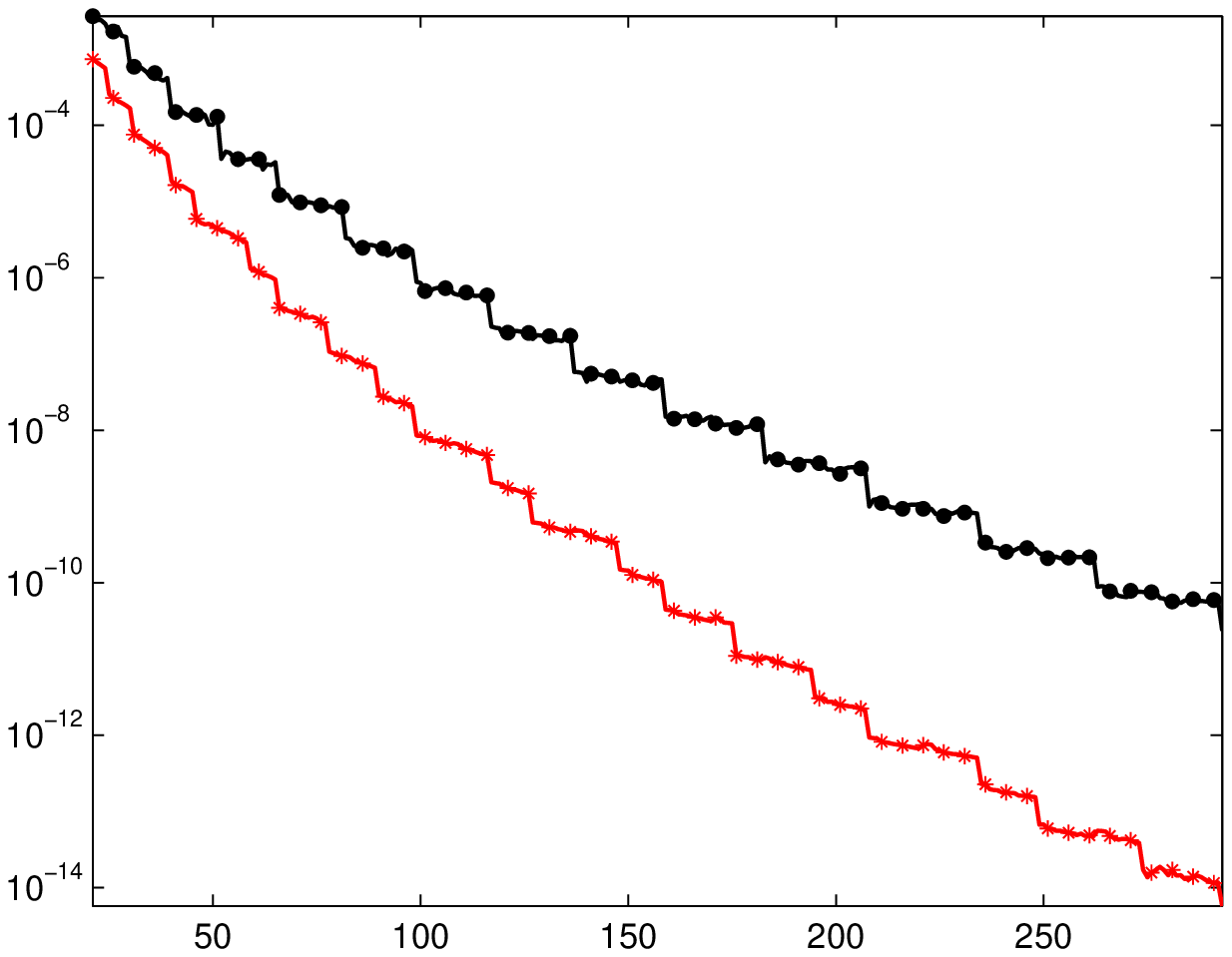}
\caption{Comparison between $E_{\hat{P}_{X}}(f_{5})$ (\textcolor[rgb]{0.98,0.00,0.00}{$*$}) (lower curve) and $E_{P_{X^{\prime }}}(f_{5})$ (\textcolor[rgb]{0.00,0.00,0.00}{$\bullet$}) (upper curve) for $20\leq n\leq 292$. When $n=292$, $\deg(\hat{P}_{X}f_{5})=m+p^{*}=37+15$.}\label{caso32}
\end{minipage}
\\
\begin{minipage}[l]{.40\textwidth}
\includegraphics[width=7cm, height=5cm]{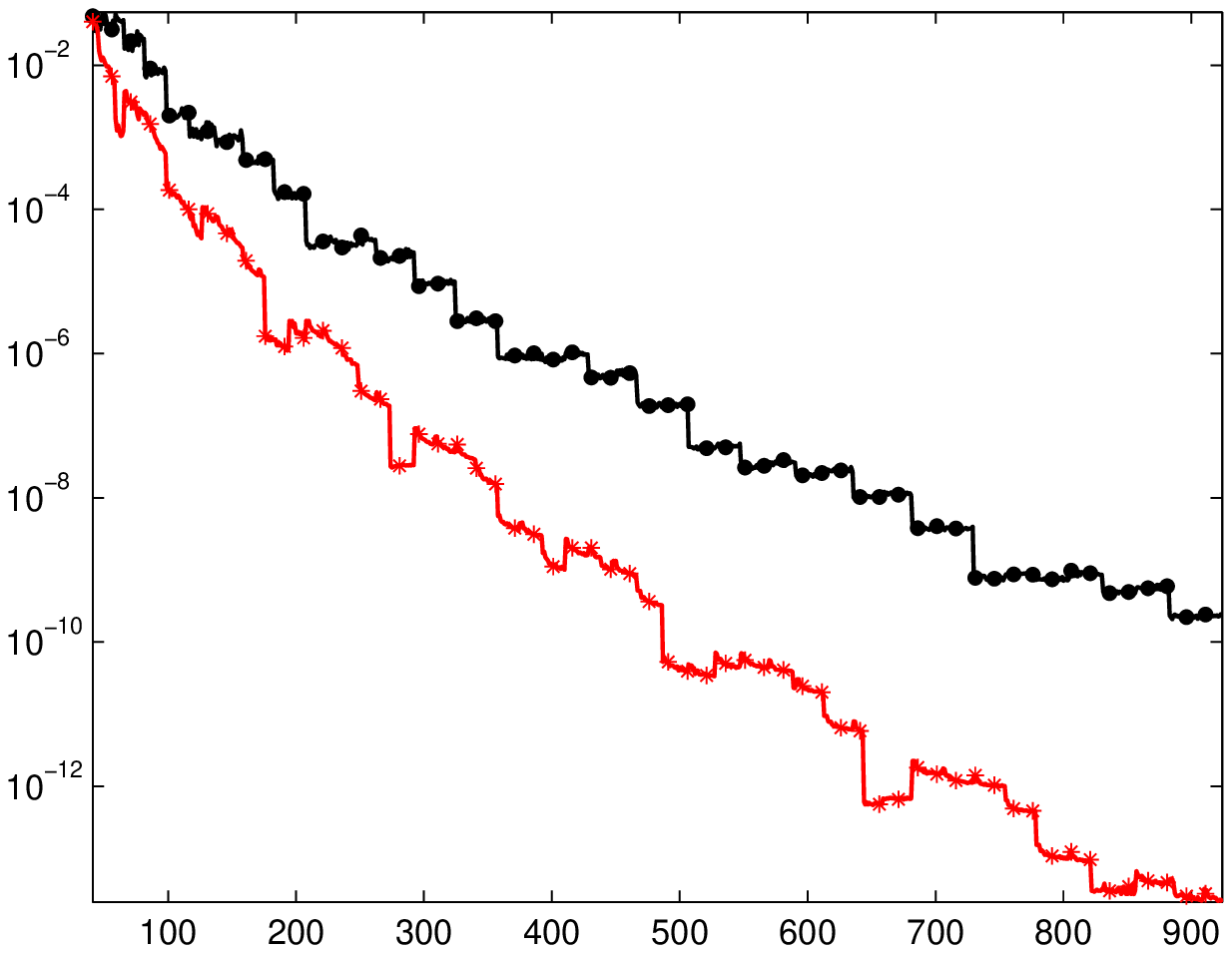}
\caption{Comparison between $E_{\hat{P}_{X}}(f_{6})$ (\textcolor[rgb]{0.98,0.00,0.00}{$*$}) (lower curve) and $E_{P_{X^{\prime }}}(f_{6})$ (\textcolor[rgb]{0.00,0.00,0.00}{$\bullet$}) (upper curve) for $40\leq n\leq 924$. When $n=923$, $\deg(\hat{P}_{X}f_{6})=m+p^{*}=67+27$.}\label{caso38}
\end{minipage}
\hspace{10mm}
\begin{minipage}[l]{.40\textwidth}
\includegraphics[width=7cm, height=5cm]{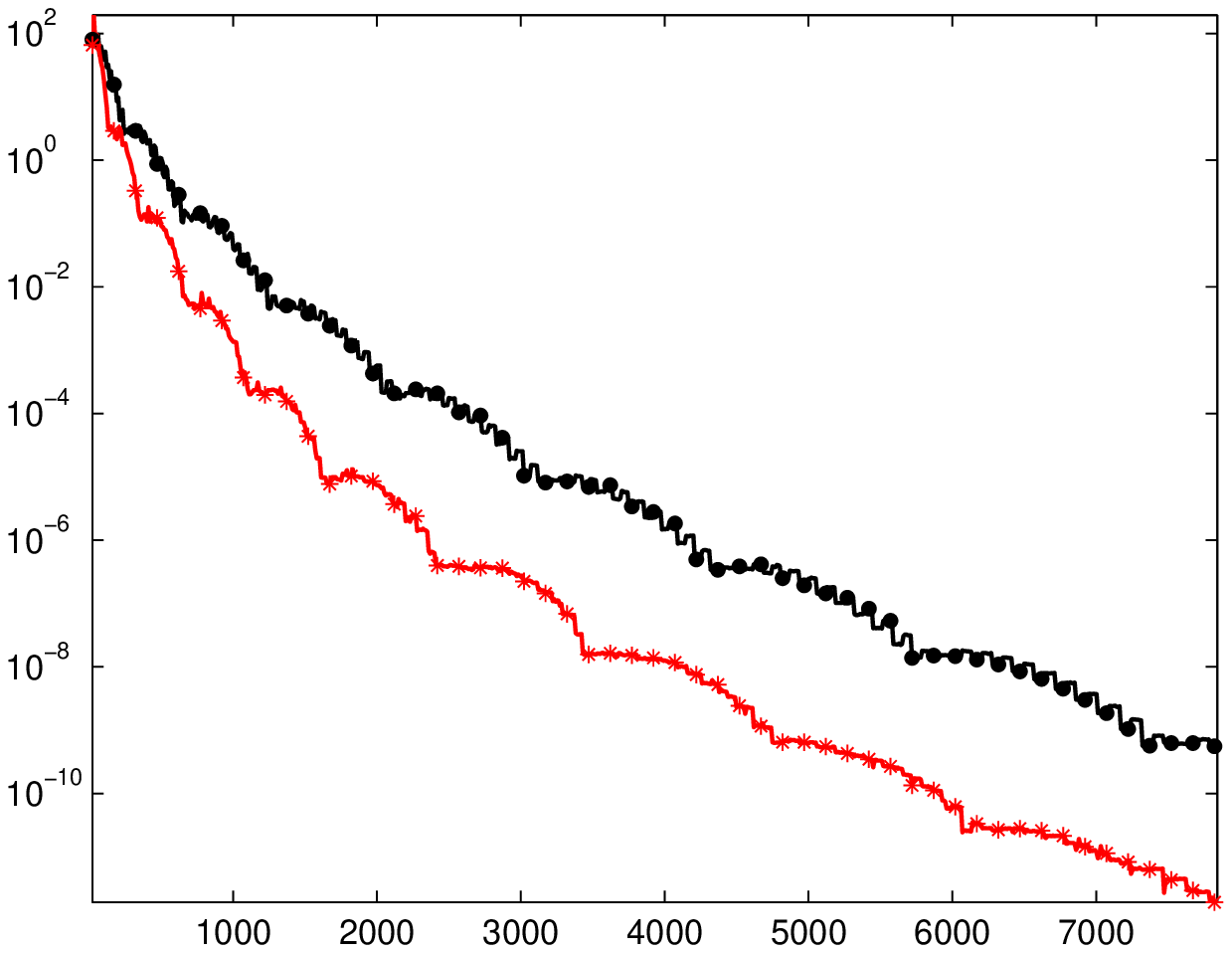}
\caption{Comparison between $E_{\hat{P}_{X}}(f_{7})$ (\textcolor[rgb]{0.98,0.00,0.00}{$*$}) (lower curve) and $E_{P_{X^{\prime }}}(f_{7})$ (\textcolor[rgb]{0.00,0.00,0.00}{$\bullet$}) (upper curve) for $20\leq n\leq7843$. When $n=7843$, $\deg(\hat{P}_{X}f_{7})=m+p^{*}=196+80$.}\label{caso39}
\end{minipage}
\end{figure}
Figure \ref{runge} compares the errors for $f_{2}$. The error in the constrained mock-Chebyshev least-squares is, for every $30\leq n\leq 3530$, smaller than the error in the mock-Chebyshev interpolation. The number $n=3530$ is due to the fact that the constrained mock-Chebyshev least-squares method reaches order $10^{-15}$ on $n+1=3531$ equispaced nodes. The accuracy of the mock-Chebyshev interpolation on the same set of nodes is of order $10^{-12}$.
Figure \ref{caso32} shows how the errors vary for the function $f_{5}$ when $20\leq n\leq 292$. Also in this case the approximation provided by the constrained mock-Chebyshev least-squares is more accurate than the one provided by the mock-Chebyshev interpolation and again when the accuracy of the former is of order $10^{-15}$ the accuracy of the latter is of order $10^{-11}$. Figure \ref{caso38} shows the errors behaviour for the function $f_{6}$ when $40\leq n\leq 924$ and the results are similar than in the previous cases. Finally, Figure \ref{caso39} compares the errors for $f_{7}$. In this case, the maximum order of precision that can  be reached by the constrained mock-Chebyshev method is $10^{-12}$.

\bigskip

The remaining part of the present Section is devoted to the comparison of the constrained mock-Chebyshev method with some Radial Basis Functions, Hermite Function interpolation (cf. \cite{boyd2013hermite}) and Floater-Hormann barycentric interpolation. A difference between these techniques and the constrained mock-Chebyshev least-squares is the structure of the approximation. Indeed, only the constrained mock-Chebyshev least-squares is based on polynomials, while the other approximants belong to other classes of functions.

\bigskip
\textit{Constrained mock-Chebyshev method vs RBF interpolation}
\bigskip
\\
\indent Given $n$ points $\xi_{1},\ldots \xi _{n}$ in $[-1,1]$ (called centers) and the corresponding values $f_{i}$ of a given function $f$ on them, an RBF interpolant for $f$ takes the form
\begin{equation*}
S(t)=\sum\limits_{i=1}^{n}\lambda _{i}\phi (\left\vert t-\xi
_{i}\right\vert )
\end{equation*}%
where $\phi (r)$ is a function defined for $r\geq 0$. The $\lambda _{i}$ are determined, as usual, by imposing the interpolation conditions $S(\xi _{j})=f_{j},$ $j=1,...,n$. Popular choices for $\phi (r)$ are (cf. \cite{fasshauer2007meshfree}):

\begin{itemize}
\item $\phi (r)=\left\vert r\right\vert ^{2m+1}$, Monomials (MN),

\item $\phi (r)=(1-r)_{+}^{4}(1+4r)$, Wendland (W2),

\item $\phi (r)=\frac{1}{\sqrt{1+(\varepsilon r)^{2}}}$, Inverse Multiquadric (IMQ),

\item $\phi (r)=\exp (-(\varepsilon r)^{2})\,$, Gaussian (G),
\end{itemize}
$\varepsilon$ is known as \textit{shape parameter} since as $\varepsilon\rightarrow 0$ RBFs become flater, while $\varepsilon\rightarrow \infty$ makes the RBFs spiky.
The first two are parameter-free and piecewise smooth, while Inverse Multiquadrics and Gaussians are
infinitely smooth and depend on $\varepsilon$. Although we will numerically compare the constrained mock-Chebyshev
method with the RBF interpolants associated to every choice of $\phi $
listed above, from a theoretical point of view we focus our
attention on the Gaussian RBFs (GRBFs).
In \cite{discroll2002interpolation} it has been proved that, when $\varepsilon
\rightarrow 0$, smooth RBF interpolants converges on the polynomial interpolants on the
same nodes. This means that, in such a flat limit case, as the polynomial
interpolation also the RBF approximation on uniform grids suffers of the
Runge phenomenon. Furthermore, in \cite{platte2011how} the author showed that the
GRBFs on equally spaced nodes and fixed parameter diverge when interpolating functions that have poles in the Runge region of polynomial interpolation.
A way to avoid the Runge phenomenon when interpolating with GRBF is to vary
the shape parameter with $n$. Indeed, as suggested in \cite{Boyd2010six}, if we define $\alpha
=\varepsilon \frac{2}{n}$, for $\alpha =O\left( \frac{%
1}{\sqrt[4]{n}}\right) $ the Runge phenomenon disappears. Such a choice has
a drawback since, as $n\rightarrow \infty $, the condition number of the
interpolation matrix increases exponentially. Hence, the GRBFs can defeat
the Runge Phenomenon just as the constrained mock-Chebyshev least-squares, but being ill-conditioned they can
be used only on few nodes. Ill-conditioning, mainly due to the basis of translates, can be
reduced significantly by using stable bases, as discussed in \cite{demarchi2012anewstable}.
\begin{figure}[htb]
\centering
\begin{minipage}[l]{.40\textwidth}
\includegraphics[width=7cm, height=5cm]{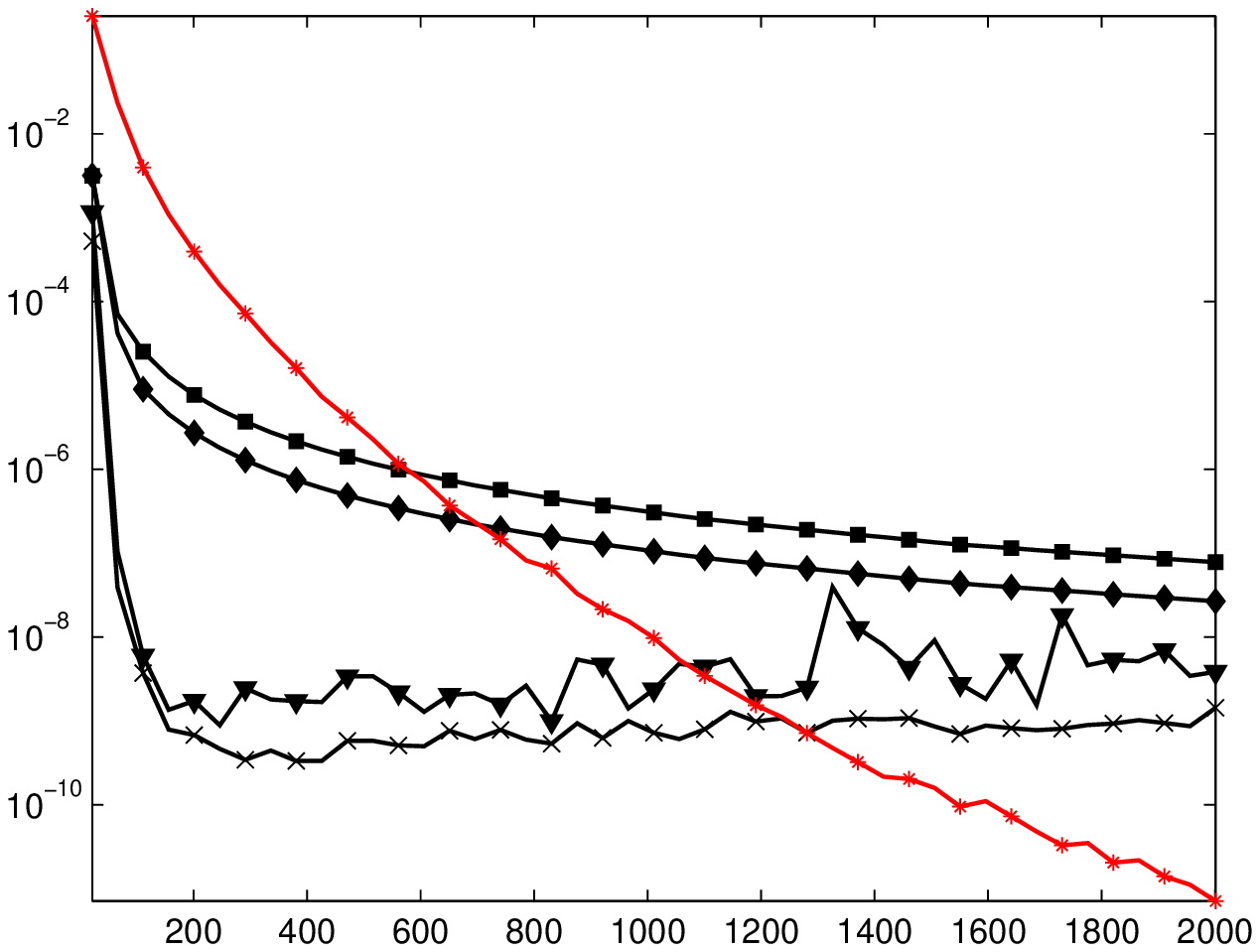}
\caption{Comparison between $E_{\hat{P}_{X}}(f_{2})$ (\textcolor[rgb]{0.98,0.00,0.00}{$*$}) and the errors obtained in approximating $f_{2}$ with (from top to bottom) W2 ($\sqbullet$), MN ($\blackdiamond$), G ($\blacktriangledown$), and IMQ ($\times$) RBF interpolants for $20\leq n\leq 2000$.}\label{rbf}
\end{minipage}\hspace{10mm}%
\begin{minipage}[l]{.40\textwidth}
\includegraphics[width=7cm, height=5cm]{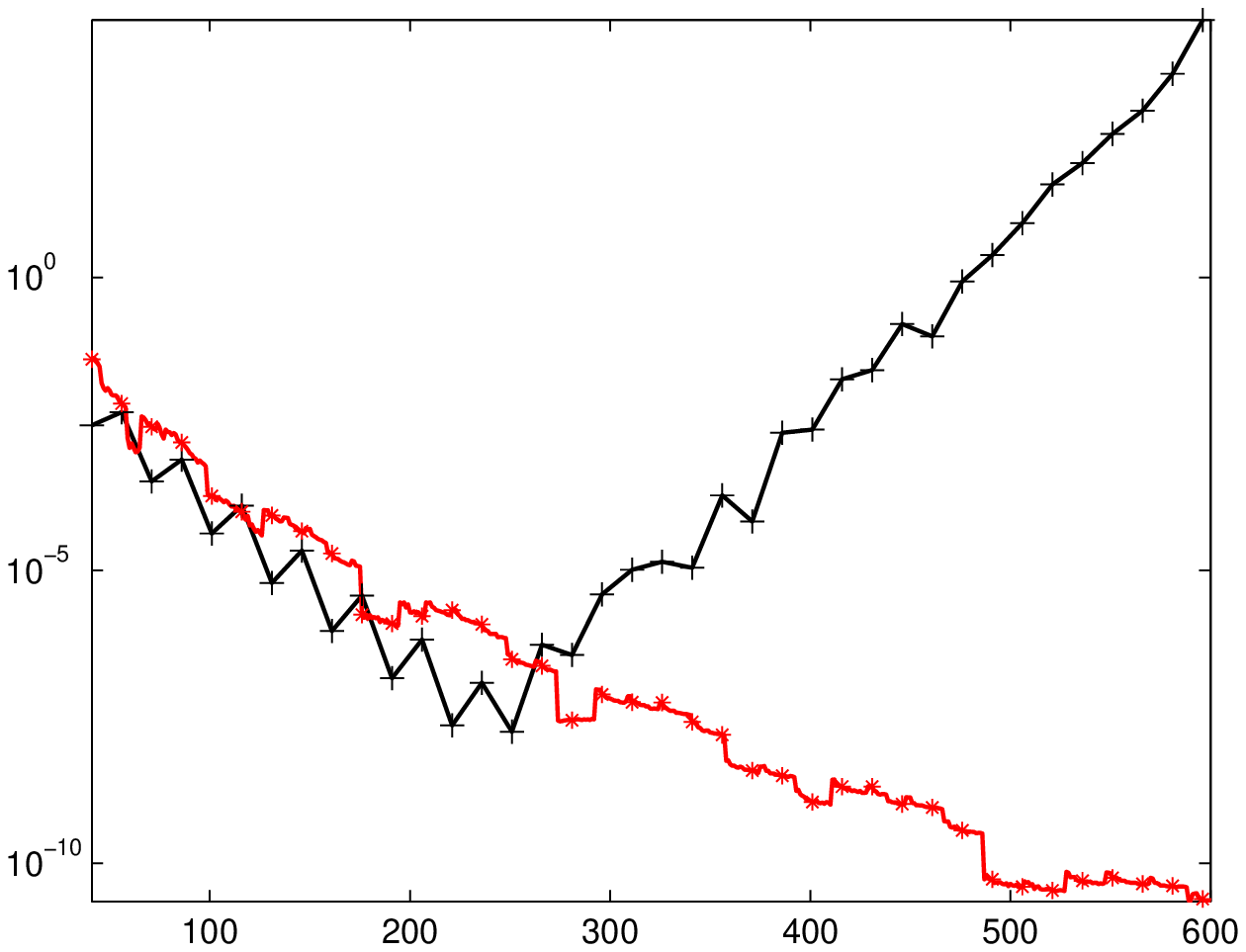}
\caption{Comparison between $E_{\hat{P}_{X}}(f_{6})$ (\textcolor[rgb]{0.98,0.00,0.00}{$*$}) and the error obtained in approximating $f_{6}$ with the Hermite function interpolant ($+$) for $40\leq n\leq 600$.}\label{hermite}
\end{minipage}
\begin{minipage}[l]{.40\textwidth}
\includegraphics[width=7cm, height=5cm]{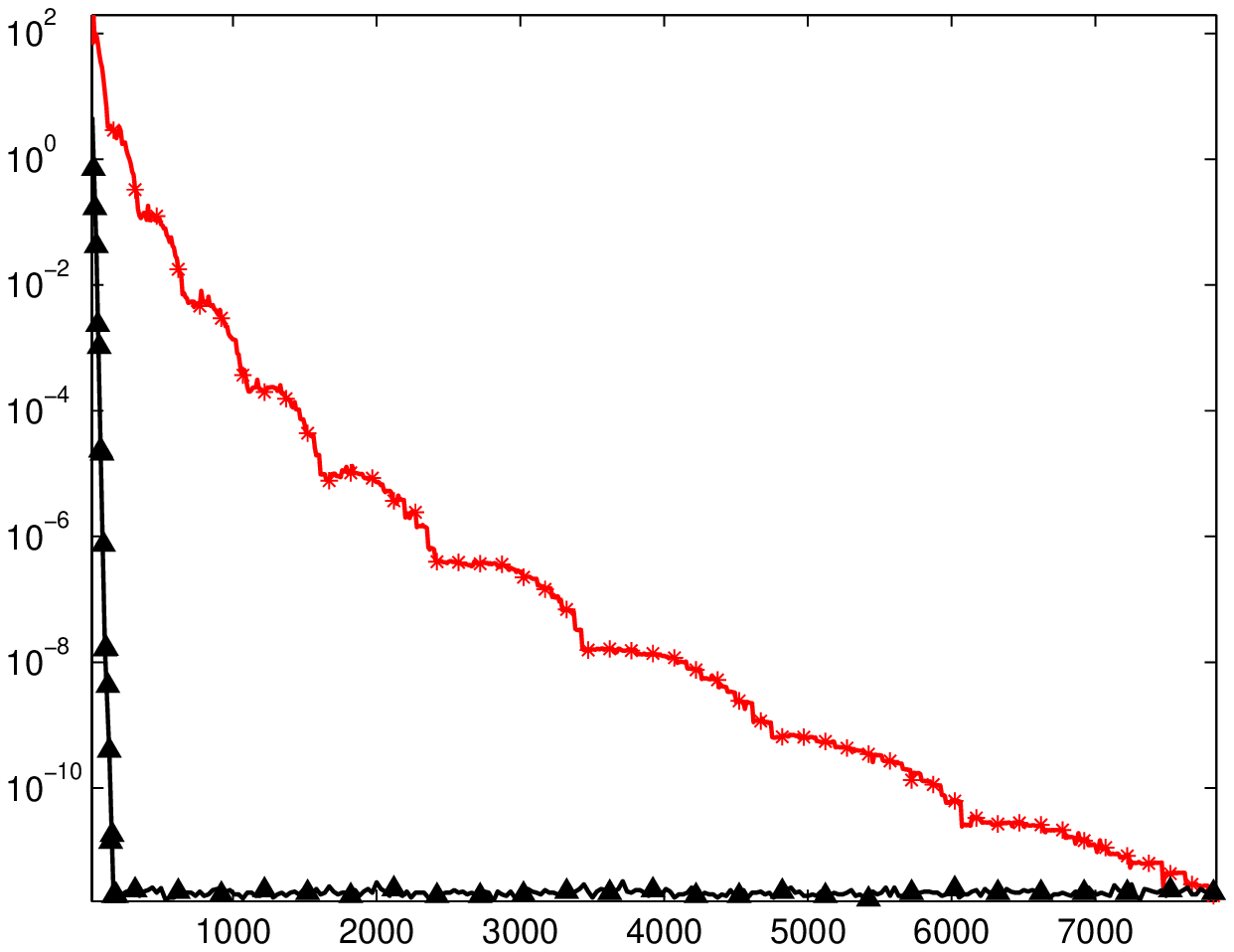}
\caption{Comparison between $E_{\hat{P}_{X}}(f_{7})$ (\textcolor[rgb]{0.98,0.00,0.00}{$ * $}) (upper curve), and the error in the Floater-Hormann barycentric interpolation (\textcolor[rgb]{0.00,0.00,0.00}{$\blacktriangleup$}) (lower curve) for $20\leq n\leq7843$.} \label{caso39mix}
\end{minipage}
\end{figure}

\indent Figure \ref{rbf} shows that, in approximating the Runge function $%
f_{2}$, the constrained mock-Chebyshev least-squares are, for initial values of $n$, less accurate than
the RBFs interpolants, while, as $n$
increases, they become more accurate. To have an idea of the discrepancy, while the constrained mock-Chebyshev
least-squares reach order $10^{-15}$ (see Figure \ref{runge}), the order of the RBFs interpolants for large $n$
ranges from $10^{-7}$ to $10^{-9}$. In performing this numerical test, for every fixed $n$, we have determined the shape parameter of IMQ and GRBFs using the so called \textit{Trial $\&$ Error} technique which consists in varying $\varepsilon$ into a fixed (discrete) range and choosing the $\lq \lq$optimal" parameter as the one that produces the minimum error. Unfortunately this method requires a lot of CPU time
for finding the $\lq \lq$optimal" shape parameter. Other techniques are also available, as
those described in \cite[Ch. 17]{fasshauer2007meshfree}, but for our purposes the Trial $\&$ Error was a suitable
way to estimate the optimal $\epsilon$.

\bigskip
\textit{Constrained mock-Chebyshev method vs Hermite function interpolation}
\bigskip
\\
\indent For a given function $f$ the Hermite function interpolant on $n$ points $\xi
_{1},\ldots \xi _{n}$ in $[-1,1]$ can be expressed
in the first barycentric form as%
\begin{equation*}
\begin{array}{ccc}
H(t)=\Omega (t)\sum\limits_{j=1}^{n}\frac{\mu _{j}}{t-\xi _{j}}f(\xi
_{j}), & \Omega(t)=\exp (-(n-1)/2\log (4)\gamma
^{2}t^{2})\prod\limits_{i=1}^{n}(t-\xi _{j}), & \mu _{j}=\left( \frac{%
d\Omega }{dt}(\xi _{j})\right) ^{-1}%
\end{array}
\end{equation*}
where $\gamma $ is a free parameter (optimal choices are $1$ or
slightly smaller). As stated in \cite{boyd2013hermite}, the computational cost
of the previous formula is $O(n^{2})$ which means that the Hermite function
interpolation is cheaper than the GRBF interpolation. Furthermore, in the same paper the authors
give numerical evidence that the Hermite function interpolation is substantially more accurate than the GRBF
interpolation. However, as RBFs, also this kind of interpolation is strongly
ill-conditioned and therefore its use must be limited to a maximum of about $250$
interpolation points. Figure \ref{hermite} shows how the ill-conditioning limits to $10^{-8}$ the best attainable accuracy in approximating $f_6$ with the Hermite interpolant, while the constrained mock-Chebyshev least-squares are very close to machine precision (see Figure \ref{caso38}).

\bigskip
\textit{Constrained mock-Chebyshev method vs Floater-Hormann interpolation}
\bigskip
\\
\indent A Floater-Hormann interpolant is a rational global approximant obtained blending local interpolating polynomials. More precisely, given $n+1$ distinct points $-1=x_0<
x_{2}<\ldots <x _{n}=1$ and fixed an integer $d$ such that $0\leq d\leq n$, a Floater-Hormann barycentric interpolant for $f$ can be written as
\begin{equation*}
R(t)=\sum\limits_{i=0}^{n-d}\nu_i(t)p_i(t)\Big{/}\sum\limits_{i=0}^{n-d}\nu_i(t)
\end{equation*}
where $p_i(t)$ is the polynomial of degree at most $d$ which interpolates $f$ in $x_i,\ldots,x_{i+d}$, $i=0,\ldots,n-d$, while
\begin{equation*}
\nu_i(t)=\frac{(-1)^i}{(t-x_i)\dots(t-x_{i+d})}.
\end{equation*}
This is a stable technique as confirmed by the study of the Lebesgue constant in \cite{bos2012onthelebesgue}. Looking at Figure \ref{caso39mix}, it is evident that, in approximanting $f_7$, the Floater-Hormann interpolant reaches $10^{-12}$ on few nodes, but then stabilizes without gaining anymore precision. Such a limit seems to be related to the smoothness of the function
and to the location of its poles within the Runge region. The error in the Floater-Hormann barycentric interpolation has been calculated using the Chebfun algorithms which for each value of $n$ choose the $\lq \lq$best" blending parameter \cite{chebfun}.

\bigskip

From previous comparisons we can conclude that the constrained mock-Chebyshev least-squares are a competitive \textit{polynomial strategy} for defeat the Runge phenomenon. In this context, we can affirm that this method currently provides the best we can expect from polynomials.

\section{Algorithm}

Let us recall that, fixed $p$ as in (\ref{p}), the polynomial $\hat{P}_{X}$ is given by%
\begin{equation*}
\hat{P}_{X}(t)=P_{X^{\prime }}(t)+\hat{Q}_{X^{\prime \prime }}(t)\omega
_{m}(t)
\end{equation*}%
where the polynomial $\hat{Q}_{X^{\prime \prime }}$ is the solution of the following least-squares problem
\begin{equation*}
\min_{Q\in \mathcal{P}^{p}}\left\Vert f-P_{X^{\prime }}-Q\omega
_{m}\right\Vert _{2}^{2}.
\end{equation*}%
We can express the previous minimum problem in matrix-form as follows
\begin{equation}
\min_{c\in
\mathbb{R}
^{p+1}}\left\Vert Ac-b\right\Vert _{2}^{2}  \label{minmat}
\end{equation}%
where $A=\left[ \omega _{m}(x_{i,n-m}^{\prime \prime
})\times(x_{i,n-m}^{\prime \prime })^{j-1}\right] _{\substack{ i=1,\ldots ,n-m \\ %
j=1,\ldots ,p+1}}$ is a real $(n-m)\times (p+1)$ matrix, $c=[c_1,\ldots,c_{p+1}]^{T}$ is the vector of coefficients of $\hat{Q}_{X^{\prime \prime }}$ and $b=\left[ P_{X^{\prime }}(x_{1,n-m}^{\prime \prime
})-f(x_{1,n-m}^{\prime \prime }),\ldots ,P_{X^{\prime }}(x_{n-m,n-m}^{\prime
\prime })-f(x_{n-m,n-m}^{\prime \prime })\right] ^{T}$.
Thus, the polynomial $\hat{P}_{X}$ can be computed using the following algorithm:
\begin{algorithm}[H]
\caption{Constrained mock-Chebyshev least-squares}
\label{alg}
\begin{algorithmic}
\REQUIRE $X_n$, the set of $n+1$ equispaced nodes in $[-1,1]$ and the evaluations of $f$ at $X_n$
\begin{enumerate}
\item Determine the subset $X'_m$ of $X_n$ whose elements are the nearest to the $m+1$ Chebyshev-Lobatto nodes and its complement $X''_{n-m}$;
\item Compute the polynomial $P_{X'}$ of degree $m$ which interpolates $f$ on $X'_m$;\label{pol}
\item Compute the polynomial $\omega_{m}$;\label{om}
\item Form the matrix $A$;
\item Solve $\min_{c\in\mathbb{R}^{p+1}}\left\Vert Ac-b\right\Vert _{2}^{2}$;\label{sist}
\end{enumerate}
\RETURN $\hat{P}_{X}=P_{X^{\prime }}+\hat{Q}_{X^{\prime \prime }}\omega_{m}.$
\end{algorithmic}
\end{algorithm}
For the sake of better readability, in Algorithm \ref{alg} we have not specified that, when we deal with the computation of a polynomial (cf. Steps \ref{pol}-\ref{om}), we refer to its evaluations on a given array. To improve the performance of this algorithm we implemented Step \ref{pol} using the barycentric formula (cf. \cite{Berrut_barycentriclagrange}). Such a formula is stable (cf. \cite{higham2004thenumerical}) and its computational cost is $O(m^2)=O(n)$. The evaluations of $\hat{Q}_{X^{\prime \prime }}$ and $\omega_m$ are performed using the Horner algorithm. Let us observe that Step \ref{sist} is the most expensive one. Since $A$ has full rank, if we solve (\ref{minmat}) with the Householder QR factorization
(which is a stable method) we need $2(n-m)(p+1)^{2}-2(p+1)^{3}/3$ flops (cf. \cite{golub1996matrix}).
Recalling that both $m$ and $p$ are proportional to $\sqrt{n}$, solving (\ref
{minmat}) requires $O(n^{2})$ flops. Thus, the cost of the constrained mock-Chebyshev least-squares is $O(n^{2})$.

\section{Conclusion and perspective}
In this work, we have combined the mock-Chebyshev interpolation with a simultaneous regression, to defeat the Runge Phenomenon for analytic functions with singularities close to the interval $[-1,1]$. We have determined a degree for the simultaneous regression and a sufficient condition under which for such a degree the error of the constrained mock-Chebyshev method is, in the uniform norm, less than the error of the mock-Chebyshev interpolation. The proposed examples confirms that, in the uniform norm, the
constrained mock-Chebyshev least-squares has better accuracy than the mock-Chebyshev interpolation. It might be interesting to extend this idea to the multivariate case on domains whose optimal distribution of nodes is known (cf. \cite{bos2007bivariate}).
\section*{Acknowledgements}
This work is supported by the "ex-$60\%$" funds of the University of Padova and by the project PRAT2012 of the University of Padova "Multivariate approximation with application to image reconstruction". We appreciated the
reviewers comments and suggestions that made the final version of the
paper more readable and clear. Furthermore, the authors would like to thank Prof. Marco Vianello of the University
of Padua for fruitful discussions with him. Finally, special thanks go to Prof. Stefano Serra-Capizzano of the University of Insubria for his valuable comments.





\bibliographystyle{ieeetr}
\bibliography{mariarosa}

\begin{thebibliography}{10}

\bibitem{rakhmanov2007bounds}
E.~A. Rakhmanov, ``Bounds for polynomials with a unit discrete norm,'' {\em
  Ann. of Math.}, vol.~165, no.~5, pp.~55--88, 2007.

\bibitem{baltensperger1999exponential}
R.~Baltensperger, J.-P. Berrut, and B.~No{\"e}l, ``Exponential convergence of a
  linear rational interpolant between transformed {C}hebyshev points,'' {\em
  Math. of Comp.}, vol.~68, no.~227, pp.~1109--1120, 1999.

\bibitem{bos2011lebesgue}
L.~Bos, S.~De~Marchi, and K.~Hormann, ``On the {L}ebesgue constant of
  {B}errut's rational interpolant at equidistant nodes,'' {\em J. Comput. Appl.
  Math.}, vol.~236, no.~4, pp.~504--510, 2011.

\bibitem{floater2007barycentric}
M.~S. Floater and K.~Hormann, ``Barycentric rational interpolation with no
  poles and high rates of approximation,'' {\em Numer. Math.}, vol.~107, no.~2,
  pp.~315--331, 2007.

\bibitem{klein2012extension}
G.~Klein, ``An extension of the {F}loater--{H}ormann family of barycentric
  rational interpolants,'' {\em Math. Comp.}, vol.~82, no.~284, pp.~2273--2292,
  2013.

\bibitem{boyd2011exponentially}
J.~P. Boyd and J.~R. Ong, ``Exponentially-convergent strategies for defeating
  the {R}unge phenomenon for the approximation of non-periodic functions, part
  two: Multi-interval polynomial schemes and multidomain {C}hebyshev
  interpolation,'' {\em Appl. Numer. Math.}, vol.~61, no.~4, pp.~460--472,
  2011.

\bibitem{rivlin1974chebyshev}
T.~Rivlin, {\em The Chebyshev polynomials}.
\newblock Wiley, 1974.

\bibitem{piazzon2013small}
F.~Piazzon and M.~Vianello, ``Small perturbations of polynomial meshes,'' {\em
  Appl. Anal.}, vol.~92, no.~5, pp.~1063--1073, 2013.

\bibitem{boyd2009divergence}
J.~P. Boyd and F.~Xu, ``Divergence ({R}unge phenomenon) for least-squares
  polynomial approximation on an equispaced grid and {M}ock--{C}hebyshev subset
  interpolation,'' {\em Appl. Math. and Comput.}, vol.~210, no.~1,
  pp.~158--168, 2009.

\bibitem{platte2011impossibility}
R.~B. Platte, L.~N. Trefethen, and A.~B. Kuijlaars, ``Impossibility of fast
  stable approximation of analytic functions from equispaced samples,'' {\em
  SIAM rev.}, vol.~53, no.~2, pp.~308--318, 2011.

\bibitem{gautschi2004orthogonal}
W.~Gautschi, {\em Orthogonal Polynomials: Computation and Approximation}.
\newblock Oxford University Press, Oxford, 2004.

\bibitem{mastroianni2008interpolation}
G.~Mastroianni and G.~V. Milovanovi{\'c}, {\em Interpolation processes: Basic
  theory and applications}.
\newblock Springer, 2008.

\bibitem{bokhari1996sub}
M.~Bokhari and M.~Iqbal, ``$ {L}_{2} $-approximation of real-valued functions
  with interpolatory constraints,'' {\em J. Comput. Appl. Math.}, vol.~70,
  no.~2, pp.~201--205, 1996.

\bibitem{reichel1986polynomial}
L.~Reichel, ``On polynomial approximation in the uniform norm by the discrete
  least squares method,'' {\em BIT}, vol.~26, no.~3, pp.~349--368, 1986.

\bibitem{cheney1966introduction}
E.~W. Cheney, {\em Introduction to {A}pproximation {T}heory}.
\newblock McGraw-Hill, New York, 1966.

\bibitem{berzins2007adaptive}
M.~Berzins, ``Adaptive polynomial interpolation on evenly spaced meshes,'' {\em
  SIAM rev.}, vol.~49, no.~4, pp.~604--627, 2007.

\bibitem{boyd2013hermite}
J.~P. Boyd and L.~F. Alfaro, ``Hermite function interpolation on a finite
  uniform grid: {D}efeating the {R}unge phenomenon and remplacing radial basis
  functions,'' {\em Appl. Math. Lett.}, vol.~26, no.~10, pp.~995--997, 2013.

\bibitem{fasshauer2007meshfree}
G.~E. Fasshauer, {\em Meshfree approximation methods with MATLAB}, vol.~6.
\newblock World Scientific, 2007.

\bibitem{discroll2002interpolation}
T.~A. Discroll and B.~Fornberg, ``Interpolation in the limit of increasingly
  flat radial basis functions,'' {\em Comput. Math. Appl.}, vol.~43,
  pp.~413--422, 2002.

\bibitem{platte2011how}
R.~B. Platte, ``How fast do radial basis function interpolants of analytic
  functions converge?,'' {\em IMA J. Numer. Anal.}, vol.~31, no.~4,
  pp.~1578--1597, 2011.

\bibitem{Boyd2010six}
J.~P. Boyd, ``Six strategies for defeating the {R}unge {P}henomenon in
  {G}aussian radial basis functions on a finite interval,'' {\em Comput. Math.
  Appl.}, vol.~60, no.~12, pp.~3108--3122, 2010.

\bibitem{demarchi2012anewstable}
S.~De~Marchi and G.~Santin, ``A new stable basis for radial basis function
  interpolation,'' {\em J. Comp. Appl. Math.}, vol.~253, pp.~1--13, 2013.

\bibitem{bos2012onthelebesgue}
L.~Bos, S.~De~Marchi, K.~Hormann, and G.~Klein, ``On the {L}ebesgue constant of
  barycentric rational interpolation at equidistant nodes,'' {\em Numer.
  Math.}, vol.~121, no.~3, pp.~461--471, 2012.

\bibitem{chebfun}
L.~N. Trefethen {\em et~al.}, {\em {C}hebfun {V}ersion 4.2}.
\newblock The {C}hebfun {D}evelopment {T}eam, 2011.
\newblock {\tt http://www.chebfun.org/}.

\bibitem{Berrut_barycentriclagrange}
J.~Berrut and N.~L. Lloyd, ``Barycentric {L}agrange {I}nterpolation,'' {\em
  SIAM Rev}, vol.~46, no.~3, pp.~501--517, 2004.

\bibitem{higham2004thenumerical}
N.~J. Higham, ``The numerical stability of barycentric {L}agrange
  interpolation,'' {\em IMA J. NUmer. Anal.}, vol.~24, no.~4, pp.~547--556,
  2004.

\bibitem{golub1996matrix}
G.~H. Golub and C.~F. Van~Loan, {\em Matrix computations}.
\newblock JHU Press, 1996.

\bibitem{bos2007bivariate}
L.~Bos, S.~De~Marchi, M.~Vianello, and Y.~Xu, ``Bivariate {L}agrange
  interpolation at the {P}adua points: the ideal theory approach,'' {\em Numer.
  Math.}, vol.~108, no.~1, pp.~43--57, 2007.

\end{thebibliography}






\end{document}